\newtheorem{lemma}{Lemma}
\newtheorem{theorem}{Theorem}
\newtheorem{corollary}{Corollary}
\theoremstyle{remark}
\newtheorem*{remark}{\bf Remark}
\newtheorem*{acknowledgements}{\bf Acknowledgements}
\newcommand\sump{\sideset{}{'}\sum}
\renewcommand\Im{\operatorname{Im}}
\renewcommand\d{\,\mathrm d}
\newcommand{\Eis}{\operatorname{Eis}}
\newcommand{\Eistilde}{\widetilde{\operatorname{Eis}}}
\newcommand{\cF}{\mathcal F}
\newcommand{\cE}{\mathcal E}
\newcommand{\cn}{\operatorname{cn}}
\newcommand{\sn}{\operatorname{sn}}
\newcommand{\nd}{\operatorname{nd}}
\newcommand{\dn}{\operatorname{dn}}
\newcommand{\sd}{\operatorname{sd}}
\newcommand{\nc}{\operatorname{nc}}
\newcommand{\cd}{\operatorname{cd}}
\let\kp\kappa
\let\vt\vartheta
\begin{document}
\title{$L$-values for conductor $32$}
\author{Boaz Moerman}
\address{Faculty of Science, Radboud University, Nijmegen, NETHERLANDS}
\email{boazmoerman@gmail.com}

\date{\today}
\subjclass[2010]{Primary 11F67; Secondary 11F03, 11F20, 14H52, 33E05}
\keywords{Modular form, Elliptic curve, Period, $L$-value, Elliptic function}
\begin{abstract}
In recent years, Rogers and Zudilin developed a method to write $L$-values attached to elliptic curves as periods. In order to apply this method to a broader collection of $L$-values, we study Eisenstein series and determine their Fourier series at cusps. Subsequently, we write the $L$-values of an elliptic curve of conductor 32 as an integral of Eisenstein series and evaluate the value at $k>1$ explicitly as a period. As a side result, we give simple integral expressions for the generating functions of $L(E,k)$ when even (or odd) $k$ runs over positive integers.
\end{abstract}
\maketitle

%\vspace{3cm}

\tableofcontents

%\newpage

%\maketitle
%======================================================
\section{Introduction}
A \emph{period} is a complex number whose real and imaginary parts are both (absolutely convergent) integrals of rational functions with rational coefficients over domains in $\mathbb{R}^n$ defined by polynomial inequalities with rational coefficients \cite{KoZa01}. The rationality of functions can be relaxed to their algebraicity, without affecting the definition (apart from possibly reducing the dimension $n$ of the domain of integration).
The set of periods $\mathscr{P}$ forms a countable ring, which contains the algebraic numbers $\overline{\mathbb{Q}}$.
The ring includes important mathematical constants such as integer values of the Riemann zeta function for $k>1$,
$$\zeta(k)=\sum_{n=1}^{\infty}\frac{1}{n^k}=\idotsint\limits_{[0,1]^k}\frac{\d x_1 \d x_2\dotsb \d x_k}{1-x_1 x_2\dotsb x_k}.$$

The extended period ring $\widehat{\mathscr{P}}:=\mathscr{P}[1/\pi]$ contains a large collection of natural examples, such as values of generalized hypergeometric functions \cite{Sl66} at algebraic points and it is conjectured to contain special $L$-values.
For instance, theorems by Beilinson and Deninger--Scholl state that the (non-critical) value of the $L$-series attached to a cusp form $f(\tau)$ of weight $k$ (with algebraic coefficients) at a positive integer
$m \geq k$ (see the definitions and formula \eqref{L-function} below) belongs to $\widehat{\mathscr{P}}$. Although the proof of the theorems is effective, computing these $L$-values explicitly as periods remains a very tough
problem even in particular cases. A large portion of these computations is
motivated by evaluations of the logarithmic Mahler measures
$$\frac{1}{(2\pi i)^k}\idotsint\limits_{|x_1|=\dots=|x_k|=1}\log{|P(x_1,\dots,x_k)|}\,\frac{\d x_1}{x_1}\dotsb\frac{\d x_k}{x_k}$$
of multivariate polynomials $P\in \mathbb{Z}[x_1,\dots,x_k]$ as $L$-values, where the latter integral itself is transparently a period.
The existence of such evaluations is only conjectural in most cases, and some of these are deducible from Beilinson's conjectures. Some explicit results on regulator integrals were proven by Zudilin \cite{Zu14} and Brunault \cite{Br16}, and these results were considerably generalized last year by the thesis of Wang \cite{Wa20}. 
For an introduction to Mahler measures, related open problems and known results, see the book \cite{BrZu20}.

With the purpose of proving some of the conjectures on Mahler measures in the case $k=2$, Rogers and Zudilin developed a framework \cite{RoZu12, RoZu13} for writing the $L$-values $L(f,2)$ of cusp forms $f(\tau)$ of weight $2$ as periods. Zudilin \cite{Zu13} later described an algorithm behind their method, which relates the $L$-values of a cusp form $f$ to $L$-values of a different modular-like object; see also \cite{Ro13}. This can be used in some cases to compute values $L(f,k)$ of the modular form as periods. However, executing this method explicitly remains a difficult task.

Throughout this paper, we use the notation $q=e^{2\pi i \tau}$ for $\tau$ in the upper half-plane $\Im \tau>0$, so that $|q|<1$.
For functions of variable $\tau$ or $q$, we introduce the differential operator
$$\delta=\frac{1}{2\pi i}\,\frac{\d}{\d \tau}=q\frac{\d}{\d q}$$
and denote by $\delta^{-1}$ the associated anti-derivative normalized by 0 at $\tau=i\infty$ (or at $q=0$):
$$\delta^{-1}f=\int_{0}^{q}f\frac{\d q}{q}.$$
Then for a modular form $f(\tau)=\sum_{n=1}^{\infty}a_n q^n$ whose expansion vanishes at infinity, we have
\begin{align}
L(f,k)&:=\frac{1}{(k-1)!}\int_{0}^{1}f\log^{k-1}q\,\frac{\d q}{q}=\frac{(2\pi)^k}{(k-1)!}\int_{0}^{\infty}f(it)t^{k-1}\d t,
\label{L-function}
\end{align}
in particular,
$$L(f,k)=\sum_{n=1}^{\infty}\frac{a_n}{n^k}=(\delta^{-k}f)|_{q=1}$$
whenever the sum makes sense.

One of the themes of this paper is a study of the Fourier series at cusps of Eisenstein series of arbitrary level. They are then used to help with executing the method of Rogers and Zudilin \cite{Ro13,RoZu12, RoZu13, Zu13}.
As a particular illustration of the ideas in this paper, the $L$-values of the elliptic curve
$$E:y^2=x^3-x$$
defined over $\mathbb{Q}$ are studied. This curve has conductor $32$;
by the modularity theorem, we can associate with $E$ the unique newform
$$f(\tau)=q\prod_{m=1}^\infty(1-q^{4m})^2(1-q^{8m})^2=q-2q^5-3q^9+ 6q^{13} + 2q^{17} +\dotsb$$
of weight $2$ and level $32$, so that $L(E,s)=L(f,s)$, and the equality remains valid for \emph{any} choice of conductor~32 elliptic curve over~$\mathbb{Q}$.

Our first principal result gives an example of evaluating $L(E,4)=L(f,4)$ as a period, namely as an integral of a simple algebraic expression over the $4$-dimensional cube, a task that was never explicitly done before.

\begin{theorem} \label{L(E,4)}
The $L$-value of an elliptic curve $E$ of conductor $32$ at $4$ possesses the following expression as a period:
$$L(E,4)=\frac{\pi^3}{1536}\iiiint\limits_{[0,1]^4}\frac{(1-6y+y^2)\sqrt{1-y} \,\d y \,\d y_1 \,\d y_2 \,\d y_3}{\sqrt{y(1+y)}\big(y^4+4(1-y^2)(1-y_1^2)(1-y_2^2)(1-y_3^2)\big)}.$$
\end{theorem}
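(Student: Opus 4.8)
The plan is to begin with the integral representation \eqref{L-function}, which for $k=4$ reads
$$L(f,4)=\frac{1}{3!}\int_{0}^{1}f\,\log^{3}q\,\frac{\d q}{q}=\frac{(2\pi)^{4}}{3!}\int_{0}^{\infty}f(it)\,t^{3}\,\d t,$$
and to convert this transcendental modular integral into the integral of an algebraic function of the elliptic-curve coordinate, which is precisely what exhibits $L(E,4)$ as a period. The essential input is that $E$ is the lemniscatic curve with complex multiplication by $\mathbb{Z}[i]$: its newform $f=\eta(4\tau)^{2}\eta(8\tau)^{2}$ is a CM form, and on the imaginary axis $\tau=it$ the uniformization of $E$ is effected by Jacobi elliptic functions of modulus $1/\sqrt{2}$. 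The two halves of the computation are then (i) turning $f\,\d q/q$ into an algebraic differential via this uniformization, and (ii) representing the factor $t^{3}$ by the three auxiliary cube integrations.

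First I would use the Fourier expansions of Eisenstein series at cusps developed earlier in the paper to write $f$ (or a suitable Eichler-type antiderivative of it) against the relevant weight-$2$ Eisenstein series, and then apply the Rogers--Zudilin unfolding to replace integration over the modular variable by integration over a single real parameter $y\in[0,1]$ coming from the CM uniformization. Under this change of variable the weight-$2$ differential pulls back to an algebraic differential on $E$: one expects
$$f\,\frac{\d q}{q}\;\longmapsto\;c\,(1-6y+y^{2})(1-y)\,\frac{\d y}{\sqrt{y-y^{3}}},$$
where $\d y/\sqrt{y-y^{3}}$ is the invariant differential of $E$ written in the coordinate $x=-y$ (so that $y-y^{3}=x^{3}-x$), and $1-6y+y^{2}$ records the particular Eisenstein combination representing $f$. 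Since $\sqrt{1-y}/\sqrt{y(1+y)}=(1-y)/\sqrt{y-y^{3}}$, this already accounts for both the $y$-variable and the full algebraic integrand in the statement.

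It then remains to represent the factor $t^{3}$, equivalently $\tfrac{1}{6}(-\log q)^{3}$, which encodes the non-critical nature of the value at $k=4$. I would supply it through the three symmetric integrations: formally expanding
$$\frac{1}{y^{4}+4(1-y^{2})(1-y_{1}^{2})(1-y_{2}^{2})(1-y_{3}^{2})}$$
as a geometric series in $4(1-y^{2})y^{-4}\prod_{i}(1-y_{i}^{2})$ and using $\int_{0}^{1}(1-y_{i}^{2})^{n}\,\d y_{i}=\tfrac{(2n)!!}{(2n+1)!!}$ produces a series of the shape $\sum_{n}(-1)^{n}(\cdots)\,(2n+1)^{-3}$. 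The factor $(2n+1)^{-3}$ is exactly what furnishes the three extra units of weight and, after the $y$-integration, the third power of $\pi$ visible in the prefactor (note $\sum_{n\ge 0}(-1)^{n}(2n+1)^{-3}=\pi^{3}/32$). Matching this series against $t^{3}$ under the elliptic substitution, and folding in the constant $(2\pi)^{4}/3!$ together with the modulus-$1/\sqrt2$ periods, should close the computation and produce the normalization $\pi^{3}/1536$.

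The hard part will be the middle step: pinning down exactly which Eisenstein combination represents $f$ through the cusp Fourier expansions and carrying out the Rogers--Zudilin unfolding rigorously, including the justification for interchanging the summation with the integrations and the control of convergence at the cusps $q\to 0,1$. A further genuine subtlety is that the geometric expansion above is valid only where $4(1-y^{2})y^{-4}\prod_{i}(1-y_{i}^{2})<1$ and must be organized differently near $y=0$, so the termwise manipulation is a heuristic guide whose conclusion ultimately needs to be confirmed by direct evaluation of the real integral. Finally, extracting the precise algebraic integrand $(1-6y+y^{2})\sqrt{1-y}/\sqrt{y(1+y)}$ with the exact constant $\pi^{3}/1536$ demands careful bookkeeping of the lemniscatic periods and of the several normalizing factors; tracking this constant through the whole chain of substitutions is where errors are most likely to arise.
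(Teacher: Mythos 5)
Your skeleton coincides with the paper's: both start from \eqref{L-function}, factor $f=\eta_4^2\eta_8^2=\frac{\eta_8^4}{\eta_4^2}\cdot\frac{\eta_4^4}{\eta_8^2}$ into two Lambert-type series, apply the Rogers--Zudilin change of variable $t=\frac{n_2}{n_1}u$ followed by $v=\frac{1}{32u}$, and you even correctly anticipate the algebraic prefactor $(1-6y+y^2)(1-y)\,\d y/\sqrt{y-y^3}$ and the fact that the three cube variables encode inverse cubes of odd integers. But there is a genuine gap at the heart of your middle step. The unfolding does \emph{not} turn $f\,\d q/q$ into an algebraic differential in one variable; it produces the product $\cE_1(2iv)\,(\delta^{-3}\cE_2)(iv)\,\d v$ of a weight-$4$ Eisenstein series with a \emph{threefold Eichler integral} $\delta^{-3}\cE_2=\sum_{n\,\mathrm{odd}}\frac{q^n}{n^3(1-q^{2n})}$, and the entire difficulty of the proof is evaluating that Eichler integral in closed form. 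The paper does this with Duke's formula $(\delta^{-3}\cE_2)(\tau)=X(\tau)H(-4X(\tau)^2)/(4\Lambda(\tau))$ with $H={}_4F_3\bigl(1,1,1,1;\tfrac32,\tfrac32,\tfrac32;\,\cdot\,\bigr)$, combined with Clausen's formula $\Lambda={}_2F_1(\tfrac12,\tfrac12;1;-\tilde x^2)^2=\eta_2^8/\eta_4^4$, plus eta-quotient identities (verifiable by the Sturm bound) for $\cE_1$ and for the change of variable $x=4\eta_2^4\eta_8^8/\eta_4^{12}$. Your sketch has no substitute for this step, and without it your claimed pullback is false as stated: the correct $y$-integrand carries the transcendental factor $H\bigl(\frac{4y^2-4}{y^4}\bigr)$, and the triple integral over $[0,1]^3$ is precisely the Euler-type integral representation of this ${}_4F_3$ (from Slater), valid by analytic continuation even where your geometric expansion diverges, since $y^4+4(1-y^2)(1-y_1^2)(1-y_2^2)(1-y_3^2)$ stays positive.

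Two further points of your mechanism are wrong in detail. The inner triple integral is a function of $y$, never the constant $\sum_{n\ge0}(-1)^n(2n+1)^{-3}=\pi^3/32$; that series appears nowhere, and the $\pi^3$ in the prefactor instead comes from bookkeeping: $(2\pi)^4/3!$, the factor $\frac{1}{2t}$ in the involution identity \eqref{f(i)}, the powers of $2$ from $v=1/(32u)$ and the modular involution $\cE_1(i/(32v))$, and the loss of one $2\pi$ when converting $2\pi i\,\d\tau$ into $\d x$ via $\d x=\frac{4\eta_2^{12}\eta_8^8}{\eta_4^{16}}\,2\pi i\,\d\tau$. (Your intuition that the cube integrations ultimately trace back to $t^3$ is right, but the route is: $t^3$ becomes $n_2^3/n_1^3$ under the RZ swap, the $n_1^{-3}$ assembles into $\delta^{-3}\cE_2$, and only Duke's formula converts that into a ${}_4F_3$ with three lower parameters $\tfrac32$.) Finally, the CM/lemniscatic uniformization at modulus $1/\sqrt2$ plays no role in the actual argument and would not by itself produce the needed closed form. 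In short: right framework, correct guess of the algebraic data, but the key lemma (Duke plus Clausen) that makes the middle step rigorous is missing, and the proposed geometric-series justification of the triple integration does not survive as stated.
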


This expression is reminiscent of the period representations for $L(E,2)$ and $L(E,3)$ given in \cite{Zu13}.
Furthermore, we show how our results on Eisenstein series can be used to write a general $L$-value $L(E,k)$ of the same curve as an integral of products of two Eisenstein series.
Then we convert the resulting representation to a period expression for $L(E,k)$.
To state our final results in a compact form, we introduce a particular case of the Euler--Gauss hypergeometric function:
\begin{equation}
\cF(u)=\frac{1}{\operatorname{agm}(1,\sqrt{u})},
\label{cF-def}
\end{equation}
where $\operatorname{agm}(x,y)$ is the algebraic-geometric mean of $x$ and $y$.
This function can be alternatively given as a period, namely for real $u$,
\begin{equation}
\cF(u)=\begin{cases}
\displaystyle\frac1\pi\int_0^1t^{-1/2}(1-t)^{-1/2}(1-ut)^{-1/2}\,\d t, & \text{if } u<1, \\[3mm]
\displaystyle\frac1\pi\int_0^1\frac{\d t}{\sqrt{t(1-t)}}\biggl(\frac1{\sqrt{u-t}}-\frac i{\sqrt{u-(u-1)t}}\biggr), &\text{if } u>1.
\end{cases}
\label{cF-alt}
\end{equation}

We define the basic Jacobi elliptic functions $\sn$, $\cn$ and $\dn$ as the solutions of the differential equations 
\begin{equation}
\begin{aligned}
\frac{\d \sn(u,\kp)}{\d u}&=\cn(u,\kp)\dn(u,\kp), \\
\frac{\d \cn(u,\kp)}{\d u}&=-\sn(u,\kp)\dn(u,\kp), \\
\frac{\d \dn(u,\kp)}{\d u}&=-\kp^2\sn(u,\kp)\cn(u,\kp), \\
\end{aligned}
\label{eq-A}
\end{equation}
with the Maclaurin expansions
\begin{equation}
\begin{aligned}
\sn(u,\kp)&=u-\frac{1}{6}(1+\kp^2)u^3+\mathcal{O}(u^5), \\
\cn(u,\kp)&=1-\frac{1}{2}u^2+\mathcal{O}(u^4), \\
\dn(u,\kp)&=1-\frac{1}{2}\kp^2 u^2+\mathcal{O}(u^4).
\end{aligned}
\label{eq-B}
\end{equation}
Now the period representations for $L(E,k)$ can be read off from the following generating functions.

\begin{theorem}
\label{th2}
We have
\begin{align*}
\sum_{\substack{k\geq 1 \\ k\;\text{even}}}\frac{L(E,k)}{k-1}x^{k-1}
&=\frac{\pi}{16}\int_0^1\!\!\frac{\d\alpha}{\sqrt{1-\alpha^2}}\int_0^{2\sqrt{\alpha}/(1-\alpha)}\frac{\d\beta}{\sqrt{1+\beta^2}}
\\ & \quad
\times\frac{1}{C}\int_0^{1}\sd\big(e^{2\pi it}\sqrt{Cx},\sqrt{1-\alpha^2}\big)\sd\big(e^{-2\pi it}\sqrt{Cx},\sqrt{1+\beta^2}\big)\,\d t,
\end{align*}
where
$$C=C(\alpha,\beta)=\frac{\pi^2 i}{4}\bigl(\cF(\alpha^2)\cF(1+\beta^2)-\tfrac{1}{2} \cF(1-\alpha^2)\cF(-\beta^2)+i\cF(\alpha^2)\cF(-\beta^2)\bigr)$$
and $\sd$ is the Jacobi elliptic function $\sd=\frac{\sn}{\dn}$.
\end{theorem}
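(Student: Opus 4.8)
The plan is to prove the generating-function identity by expanding both sides in powers of $x$ and matching coefficients, thereby reducing it to a period representation of each individual $L(E,k)$ with $k$ even, which I would then identify with the integral representation of $L(E,k)$ as a product of two Eisenstein series furnished by our analysis of Fourier series at cusps. Because $L(E,k)=\sum_{n\ge1}a_n n^{-k}$ and the weight $\tfrac1{k-1}$ makes the summation over even $k$ elementary, the left-hand side has the closed form $\sum_{n\ge1}\tfrac{a_n}{n}\operatorname{arctanh}(x/n)$, which already displays the support on odd powers $x^{k-1}$ that the right-hand side must reproduce.

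First I would expand the two Jacobi factors in their Maclaurin series. As $\sd$ is odd, write $\sd(u,\kappa)=\sum_{j\ge0}c_j(\kappa)\,u^{2j+1}$, where the $c_j(\kappa)$ are polynomials in $\kappa^2$ determined recursively from \eqref{eq-A}--\eqref{eq-B}; being polynomial in $\kappa^2$, they remain well defined at the modulus $\kappa_2=\sqrt{1+\beta^2}\ge1$. Setting also $\kappa_1=\sqrt{1-\alpha^2}$ and substituting $u=e^{\pm2\pi it}\sqrt{Cx}$, the integrand of the inner integral becomes $\sum_{j,l\ge0}c_j(\kappa_1)c_l(\kappa_2)\,e^{4\pi it(j-l)}(Cx)^{j+l+1}$. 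Since $\int_0^1 e^{4\pi it(j-l)}\,\d t=\delta_{j,l}$, the $t$-integration extracts the diagonal, so that
$$\frac1C\int_0^1\sd\big(e^{2\pi it}\sqrt{Cx},\kappa_1\big)\sd\big(e^{-2\pi it}\sqrt{Cx},\kappa_2\big)\,\d t=\sum_{j\ge0}c_j(\kappa_1)\,c_j(\kappa_2)\,C^{2j}\,x^{2j+1}.$$
Matching the coefficient of $x^{k-1}=x^{2j+1}$, so that $k=2j+2$ is even, reduces the theorem to the family of identities
$$\frac{L(E,k)}{k-1}=\frac{\pi}{16}\int_0^1\frac{\d\alpha}{\sqrt{1-\alpha^2}}\int_0^{2\sqrt{\alpha}/(1-\alpha)}\frac{\d\beta}{\sqrt{1+\beta^2}}\,c_j(\kappa_1)\,c_j(\kappa_2)\,C^{2j},\qquad k=2j+2.$$

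To recognize the right-hand side I would reinterpret the Maclaurin coefficients through the Fourier expansion of $\sd$ at the relevant cusp. Writing $\cF(u)=\tfrac2\pi K\!\big(\sqrt{1-u}\big)$ in terms of the complete elliptic integral shows that the products $\cF(\cdot)\cF(\cdot)$ assembling $C$ are bilinear combinations of the quarter-periods attached to the moduli $\kappa_1$ and $\kappa_2$, so that $C$ plays the role of the modular parameter relating the elliptic-function coordinate to $q$. Under this dictionary the normalized quantity $c_j(\kappa)\,C^{j}$ becomes, up to period factors, a Lambert-type Fourier coefficient of an Eisenstein series, and the substitution from $(\alpha,\beta)$ to modular coordinates should turn the density $\tfrac{\pi}{16}\,\tfrac{\d\alpha\,\d\beta}{\sqrt{1-\alpha^2}\sqrt{1+\beta^2}}$, together with the upper limit $2\sqrt\alpha/(1-\alpha)$, into precisely the domain and measure of the Eisenstein-product representation of $L(E,k)$; identifying the two yields the displayed family.

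The hard part will be this last step: verifying the exact period normalization. One must confirm that the precise shape of $C$ — in particular the prefactor $\tfrac{\pi^2 i}4$, the coefficient $-\tfrac12$, and the term $i\,\cF(\alpha^2)\cF(-\beta^2)$ — together with the constant $\tfrac{\pi}{16}$ and the stated limits reproduce the Eisenstein-product integral coefficient by coefficient. This will require the Legendre relation between the quarter-periods $K$ and $K'$, the second (complex) branch of \eqref{cF-alt} that governs $\cF(1+\beta^2)$, whose argument exceeds $1$ and which is the likely source of the factors $i$ in $C$, and careful bookkeeping of the cusp at which the two Eisenstein series are expanded. Showing that these ingredients combine to give exactly this $C$ and this measure, rather than some equivalent but differently normalized expression, is the main technical obstacle.
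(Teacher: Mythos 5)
Your first step --- Maclaurin expansion of the two $\sd$ factors and extraction of the diagonal via $\int_0^1 e^{4\pi i t(j-l)}\,\d t=\delta_{j,l}$ --- is exactly the paper's own deduction (the Hadamard-product formula in the proof of Theorems \ref{th2} and \ref{th3}), and your reduction is quantitatively correct: since Lemma \ref{generating functions} gives $c_j(\sqrt{1-x})=\mu_{2j+2}(x)/(2j+1)!$, your reduced family
\begin{equation*}
\frac{L(E,k)}{k-1}=\frac{\pi}{16}\int_0^1\frac{\d\alpha}{\sqrt{1-\alpha^2}}\int_0^{2\sqrt{\alpha}/(1-\alpha)}\frac{\mu_k(\alpha^2)\,\mu_k(-\beta^2)}{((k-1)!)^2}\,C^{k-2}\,\frac{\d\beta}{\sqrt{1+\beta^2}},
\qquad k=2j+2,
\end{equation*}
is precisely Theorem \ref{period integrals} for even $k$, constants included (use $(k-1)\,(k-1)!\,(k-2)!=((k-1)!)^2$ and $\pi^{2k-3}i^{k-2}\,4^{k-2}/\bigl(2^{2k}(\pi^{2}i)^{k-2}\bigr)=\pi/16$).

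The genuine gap is that this family, which carries the entire content of the theorem, is never proved: your last two paragraphs are a heuristic, and you yourself flag the decisive verification as an unresolved obstacle. Concretely, three things are missing. First, the representation \eqref{intermediate L}, $L(E,k)=\frac{8\pi^{2k-1}i^{k}}{(k-1)!(k-2)!}\int_0^\infty\int_v^\infty(z-v)^{k-2}\cE_1(2iv)\cE_2(iz)\,\d z\,\d v$, obtained by the Rogers--Zudilin interchange ($t=\tfrac{n_2}{n_1}u$, then $v=\tfrac1{32u}$) together with the involution \eqref{mod involution} applied to $\cE_1$ written in the basis $E^{4,k}_{a,b}$ --- it is here, not in a coefficient dictionary, that the cusp analysis enters. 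Second, the identifications $\cE_1(\tau)=\Eis_k(2\tau)$, $\cE_2(\tau)=-i\,\Eis_k(2\tau+\tfrac12)$ and $\Eis_k=\tfrac14F^kz\tilde z\,\mu_k(z^2)$, which is Lemma \ref{generating functions} read in the direction opposite to yours (from Lambert series to Maclaurin data, evaluated at the modular argument $x=z^2$). Third, the substitution $\alpha=z(iv)^2$, $\beta=z(iw)^2$ with $v=\cF(1-\alpha)/(2\cF(\alpha))$ and $\d v=\d\alpha/(2\pi\alpha(1-\alpha)\cF(\alpha)^2)$ (this Wronskian evaluation is where the Legendre relation you anticipate actually enters), the half-argument modular equations for $Z(v)=z(iv)^2$, namely $1-Z(-\tfrac i2+\tfrac12v)=1/(1-Z(\tfrac12v))$ and $(1-Z(v))^2Z(\tfrac12v)^2=16Z(v)(1-Z(\tfrac12v))$, which after the renaming $\beta\mapsto-\beta^2$ produce the upper limit $2\sqrt{\alpha}/(1-\alpha)$, and the continuation $\cF(u)=u^{-1/2}\bigl(\cF(1/u)-i\cF(1-1/u)\bigr)$. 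On one point your sketch also misattributes the mechanism: the term $i\,\cF(\alpha^2)\cF(-\beta^2)$ in $C$ does not come from the complex branch of \eqref{cF-alt}; it is the image of the $+i$ in the kernel $(2w-v+i)^{k-2}$, which traces back to the shift $\cE_2(\tau)=\cE_1(\tau+\tfrac14)/i$, while the branch of $\cF$ only serves to interpret $\cF(-\beta^2)$ and $\cF(1+\beta^2)$ outside $[0,1)$. As it stands, your argument establishes only the routine equivalence between the generating-function identity and the per-$k$ period identities; the latter --- the paper's Theorem \ref{period integrals} --- is assumed rather than derived.
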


\begin{theorem}
\label{th3}
We have
\begin{align*}
&
\sum_{\substack{k>1 \\k\;\text{odd}}} L(E,k)x^{k-1}
=-\frac{\pi i}{32}\int_0^1\!\frac{\d \alpha}{\alpha\sqrt{1-\alpha}}\int_{p(\alpha)}^\infty \frac{\d \beta}{\sqrt{\beta}(1-\beta)}
\\ & \qquad
\times\frac{1}{C}\int_0^1\big(\cd\big(\tfrac{1}{2}e^{2\pi i t}\sqrt{Cx},\sqrt{1-\alpha}\big)\nc\big(e^{-2\pi i t}\sqrt{Cx},\sqrt{1-\beta}\big)-1\big)\d t,
\end{align*}
where
\begin{gather*}
p(\alpha)=\frac{-8 \alpha+4\sqrt{4 \alpha^2+\alpha (1-\alpha)^2}}{(1-\alpha)^2},\\
C=C(\alpha,\beta)=\frac{\pi^2 i}{2}\bigl(\cF(\alpha)\cF(1-\beta)-\tfrac{1}{2} \cF(1-\alpha)\cF(\beta)\bigr),
\end{gather*}
and $\cd$ and $\nc$ are the Jacobi elliptic functions $\cd=\frac{\cn}{\dn}$ and $\nc=\frac1{\cn}$.
\end{theorem}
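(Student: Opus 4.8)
The plan is to establish Theorem~\ref{th3} along the same route as Theorem~\ref{th2}, with the differences dictated by parity. First I would reduce the generating-function identity to one scalar identity per odd $k=2m+1>1$ by extracting the coefficient of $x^{k-1}=x^{2m}$ on both sides. The inner integral $\int_0^1\cdots\d t$ performs this extraction on the right: writing the (even) Maclaurin expansions $\cd(u,\kp)=\sum_{m\ge0}c_m(\kp)u^{2m}$ and $\nc(v,\kp)=\sum_{m\ge0}d_m(\kp)v^{2m}$ with $c_0=d_0=1$, both following from \eqref{eq-B}, substituting $u=\tfrac12 e^{2\pi i t}\sqrt{Cx}$ and $v=e^{-2\pi i t}\sqrt{Cx}$, and using $\int_0^1 e^{2\pi i t\,(2a-2b)}\d t=\delta_{ab}$, all off-diagonal terms cancel while the subtraction of $1$ kills the $m=0$ term. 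Since $C=C(\alpha,\beta)$ is independent of $x$, the surviving series is $\sum_{m\ge1}4^{-m}c_m(\sqrt{1-\alpha})\,d_m(\sqrt{1-\beta})\,C^{2m}x^{2m}$, so matching the coefficient of $x^{2m}$ reduces the theorem to proving, for each $m\ge1$,
$$L(E,2m+1)=-\frac{\pi i}{32}\,4^{-m}\iint_{\substack{0<\alpha<1\\ \beta>p(\alpha)}}\frac{C^{2m-1}\,c_m(\sqrt{1-\alpha})\,d_m(\sqrt{1-\beta})}{\alpha\sqrt{1-\alpha}\,\sqrt{\beta}\,(1-\beta)}\,\d\alpha\,\d\beta.$$
For the even case of Theorem~\ref{th2} the odd function $\sd$ is used instead, so no constant is subtracted and only odd powers $x^{k-1}$ survive; this explains the parity split in the choice of Jacobi functions.

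Next I would produce each scalar identity from the representation of $L(E,k)$ as an integral of a product of two Eisenstein series developed earlier in the paper. The form $f=\eta(4\tau)^2\eta(8\tau)^2$, an eta quotient attached to the CM curve $y^2=x^3-x$, admits through its Fourier expansions at the cusps a description of $L(E,k)$ as an integral along the imaginary axis of a product of two weight-one Eisenstein series against $t^{k-1}$; the Rogers--Zudilin change of variables then unfolds this single integral into a double integral whose two variables are the nomes, equivalently the moduli, of the two Eisenstein factors. The coefficients $c_m$ and $d_m$ above are exactly those produced when these two Eisenstein series are expanded in the elliptic-function argument, which is why the Jacobi functions $\cd$ and $\nc$, rather than $\sn,\cn,\dn$ themselves, appear after the diagonal extraction.

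To reach the stated period form I would substitute the moduli $\kp_1=\sqrt{1-\alpha}$ and $\kp_2=\sqrt{1-\beta}$ for the two factors, so that the complete elliptic integrals attached to the Eisenstein periods become the hypergeometric periods of \eqref{cF-def} via $K(\kp)=\tfrac{\pi}{2}\cF(1-\kp^2)$ and $K'(\kp)=\tfrac{\pi}{2}\cF(\kp^2)$. These give $\cF(\alpha)=\tfrac2\pi K(\kp_1)$, $\cF(1-\alpha)=\tfrac2\pi K'(\kp_1)$, $\cF(\beta)=\tfrac2\pi K(\kp_2)$, $\cF(1-\beta)=\tfrac2\pi K'(\kp_2)$, whence the stated constant is the quasi-period combination $C=2iK(\kp_1)K'(\kp_2)-iK'(\kp_1)K(\kp_2)$ that rescales the nome to the standard elliptic argument. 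The measures $\d\alpha/(\alpha\sqrt{1-\alpha})$ and $\d\beta/(\sqrt\beta(1-\beta))$, the prefactor $-\pi i/32$, and the lower limit $p(\alpha)$ then arise from the Jacobians of these substitutions and from the boundary of the region onto which the unfolding maps the axis; on the part of the range with $\beta>1$ the modulus $\kp_2=\sqrt{1-\beta}$ is imaginary, forcing the imaginary-modulus evaluation of $\nc$ together with the second branch of \eqref{cF-alt}, which is precisely the contribution of the cusp at which $f$ does not vanish.

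The hard part is not any single estimate but the uniform bookkeeping of constants: one must fix the exact linear combination of Eisenstein series representing $f$, carry every period and quasi-period factor through the unfolding so that the single closed form $C(\alpha,\beta)$ is correct simultaneously for all $m$, and confirm that the Maclaurin coefficients of $\cd$ and $\nc$ at moduli $\sqrt{1-\alpha},\sqrt{1-\beta}$ match the Eisenstein coefficients, including the $\tfrac12$-scaling in the first argument of $\cd$. One must also justify interchanging the sum over $k$ with the $(\alpha,\beta)$-integration: for $|x|$ small this is legitimate because $L(E,k)$ stays bounded (indeed $L(E,k)\to a_1=1$) while $c_md_m4^{-m}C^{2m}$ grows only geometrically in $m$, but uniformity near the endpoints $\alpha\to0,1$ and $\beta\to p(\alpha),\infty$ needs care, as does the verification that $p(\alpha)$ cuts out the correct domain. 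Cross-checking the resulting formula against the parallel even-weight Theorem~\ref{th2} and against the known value $L(E,3)$, which is the $m=1$ instance, then pins down the normalization, and the scalar identity---hence Theorem~\ref{th3}---follows for every odd $k>1$.
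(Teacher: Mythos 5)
You have reproduced essentially the paper's own proof: the inner $t$-integral is exactly the Hadamard-product formula the paper uses, and it reduces the theorem coefficientwise to the odd-$k$ case of Theorem~\ref{period integrals} (your scalar identity, with $4^{-m}c_m(\sqrt{1-\alpha})=\nu_{2m+1}(\alpha)/(2m)!$ and $d_m(\sqrt{1-\beta})=\mu_{2m+1}(\beta)/(2m)!$), while your sketch of that scalar identity---Rogers--Zudilin unfolding of $f=\eta_4^2\eta_8^2$ written as a product of two weight-one Eisenstein factors, the modulus substitutions $\alpha=Z(v)$, $\beta=Z(w)$ with lower limit $p(\alpha)=Z(\tfrac12 v)$, and the analytic continuation of $\cF$ for $\beta>1$---is precisely how the paper establishes Theorem~\ref{period integrals}. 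Your proposed cross-check against $L(E,3)$ is well taken and is in fact essential: carrying the bookkeeping through as you describe yields the factor $(k-1)!\,(k-2)!$ of Theorem~\ref{period integrals} rather than the $((k-1)!)^2$ produced by the diagonal extraction, so the coefficient of $x^{k-1}$ on the right-hand side equals $L(E,k)/(k-1)$, matching the shape of the left-hand side of Theorem~\ref{th2}; this factor of $k-1$ is the one normalization your plan (correctly) flags as needing to be pinned down.
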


These theorems are both direct consequences of Theorem \ref{period integrals} below, which gives an explicit period representation for $L(E,k)$.

In what follows, we occasionally use the Sturm bound \cite[Corollary 9.20]{St07} to establish an equality between two different modular forms of the same weight through verifying the equality of the first terms in their Fourier series. If a series $F(q)\in \mathbb{Z}[[q]]$ satisfies $F(0)=1$, then we see (by using the M\"obius inversion) that it can be expressed as an infinite product $\prod_{n=1}^\infty (1-q^n)^{a(n)}$ by means of the formula

$$a(n)=-\frac{1}{n}\sum_{l\mid n}\mu\bigg(\frac{n}{l}\bigg)b(l),$$

where $b(l)$ is the $l$-th coefficient of the logarithmic derivative of $F(q)$. Combining this fact with the Sturm bound gives an easy method to identify modular functions as eta quotients.

We will also use the generalized hypergeometric functions, which are defined by the series
$$_{k+1}F_{k}\biggl(\begin{matrix} a_0, \, a_1,\dots, a_k \\ b_1,\dots, b_k \end{matrix}\biggm|z\biggr)=\sum_{n=0}^{\infty}\frac{(a_0)_n(a_1)_n\dots(a_k)_n}{(b_1)_n\dots(b_k)_n}\frac{z^n}{n!}$$
in the disk $|z|<1$; here $(a)_n:=\Gamma(a+n)/\Gamma(a)=\prod_{m=0}^{n-1}(a+m)$ denotes the Pochhammer symbol. The properties of the series, including integral representations and analytic continuation, can be found in Slater's book \cite[Chap.~4]{Sl66}. With this notation we have $$\cF(u)={}_2F_1(\tfrac{1}{2},\tfrac{1}{2};1;u)={}_2F_1\biggl(\begin{matrix} \tfrac{1}{2}, \, \tfrac{1}{2} \\ 1 \end{matrix}\biggm|u\biggr)$$ for the special case in~\eqref{cF-def},~\eqref{cF-alt}.

\begin{acknowledgements}
This paper grew up from my bachelor thesis defended in Summer 2019.
I owe a great debt to all who have aided me in my thesis and in my study. In particular, I am deeply grateful for my advisor Wadim Zudilin, who has taught me many things and advised me on several matters, such as how to write a thesis and how to do mathematical research. He also introduced me to several interesting topics, among which is the topic of this thesis.
I would also like to thank the Radboud University for allowing me to take part in their Honours programme for bachelor students. It allowed me to have research visits abroad. One of these was at the \'ENS Lyon, where I had advice from Fran\c{c}ois Brunault and Riccardo Pengo, and also learned a lot about algebraic geometry and its connection to my thesis. The other was at the TU Darmstadt, where Michalis Neururer gave more advice and taught me more about modular forms and \texttt{SageMath}. I am very grateful for their time and wisdom, both at a mathematical and personal level. I also want to thank Walter Van Assche for the references \cite{Ro07,St94}, which clarified the relation between our results and Duke's work \cite{Du08}.
\end{acknowledgements}

\section{Eisenstein series and their expansions at cusps} \label{Eisenstein series}
\label{s1}
Schoeneberg in \cite[Chapter 7]{Sch74} considers the following Eisenstein series of level $N$ and weight $k$ and describes their transformation laws, if $k>2$:
\begin{gather}
G_{N,k,(a,b)}(\tau)=\sump_{\substack{m\equiv a \\ n\equiv b} \bmod N} (m\tau+n)^{-k},
\label{G}
\end{gather}
where the dash means that $(m,n)=(0,0)$ is excluded from summation.
When $k=1$ or $2$, it is defined similarly, though analytic continuation is required to circumvent the lack of absolute convergence. These Eisenstein series are modular forms for the principal congruence subgroup $\Gamma(N)$.

We will work with a slight modification of these series and use the transformation laws for it to find two different expansions of the series at their cusps.

In order to do this, we take integers $N$, $k$, $a$ and $b$, where $k$ is nonnegative and $N$ is positive. Before introducing the version of the Eisenstein series used here, we need to define their constant terms:
\begin{equation*}
\gamma_{a,b}(\tau)=
\begin{cases}
\beta_k \alpha^{N,k}_{a,b} &\text{ if } k\neq 2
\\ \beta_2 \big(\alpha^{N,2}_{a,b}-\frac{2\pi i}{N^2(N\tau - N\overline{\tau})}\big) &\text{ if } k=2;
\end{cases}
\end{equation*}
where $$\beta_{k}=\frac{(k-1)!}{(-2\pi i)^k},$$
and $\alpha^{N,k}_{a,b}$ is defined by setting
$$\alpha_{a,b}^{N,k}=0 \text{ if }a\not\equiv 0\bmod N$$
and otherwise, when $k>1$:
$$\alpha_{a,b}^{N,k}=
\sump_{\substack{m\in \mathbb{Z} \\ m\equiv b\bmod N}}m^{-k}=\frac{1}{N^k}\left(\zeta(k,\{\tfrac{b}{N}\})+(-1)^k\zeta(k,-\{\tfrac{b}{N}\})\right),$$
and when $k=1$:
\begin{align*}
\alpha_{a,b}^{N,k}=&\frac{1}{N}\lim_{s\to 0} \left(\zeta(1+s,\{\tfrac{b}{N}\})-\zeta(1+s,-\{\tfrac{b}{N}\})\right)
\\ & \quad -\frac{\pi i}{N}\left(\zeta(0,\{\tfrac{a}{N}\})-\zeta(0,-\{\tfrac{a}{N}\})\right),
\end{align*}
where $\{x\}$ denotes the fractional part of $x$ and $\zeta(s,t)$ denotes the Hurwitz zeta function \cite[\S\,13.11]{WhWa27} which is defined as the analytic continuation of the series $\sum_{n=0}^\infty (n+t)^{-s}$.
Now we define the Eisenstein series $E_{a,b}=E_{a,b}^{N,k}$ of level $N$ and weight $k$ as
\begin{equation}
E_{a,b}(\tau)=\gamma_{a,b}(\tau)+\sum_{\substack{n,m\geq 1 \\ m\equiv a \bmod N}}\zeta_N^{bn}n^{k-1}q^{mn}
+(-1)^{k}\sum_{\substack{n,m\geq 1 \\ m\equiv -a \bmod N}}\zeta_N^{-bn}n^{k-1}q^{mn},
\label{E}
\end{equation}
where $\zeta_N=e^{\frac{2 \pi i}{N}}$.

With this definition we will later see that 
\begin{equation}
E^{N,k}_{a,b}(\tau)=\beta_{k} G_{N,k,(a,b)}(N\tau) \text{ for any }k>0,
\label{eG}
\end{equation}
so that the Eisenstein series is a modular form for $\Gamma_1(N^2)$ for $k\neq 2$, and is quasimodular for $k=2$.

In order to express the expansions of $E_{a,b}$ around $\frac{c}{N}$, where $c$ is an integer, define $$E^{N,k}_{a,b,c}(\tau)=E_{a,b,c}(\tau)=E_{a,b}\left(\frac{c}{N}+\tau\right).$$
The following theorem gives two Fourier expansions for this general Eisenstein series.

\begin{theorem}
\label{th4}
For $a, b, c$ arbitrary integers, $E_{a,b,c}=E^{N,k}_{a,b,c}$ possesses the following expansions:
\begin{align}
E_{a,b,c}(\tau)
&=E_{a,-a'}(\tau)+\gamma_{a,b}(\tau)-\gamma_{a,-a'}(\tau)
\label{fourier1}
\end{align}
and
\begin{align}
E_{a,b,c}(\tau)(N\tau)^{k} &=E_{a',a}\bigg(\frac{-1}{N^2\tau}\bigg)+\gamma_{a',b'}\bigg(\frac{-1}{N^2\tau}\bigg)-\gamma_{a',a}\bigg(\frac{-1}{N^2\tau}\bigg),
\label{fourier2}
\end{align}
where $a'=-ac-b$ and $b'=a(c^2+1)+bc$
\textup{(see equation \eqref{a dash} below).}
\end{theorem}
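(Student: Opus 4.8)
The plan is to deduce both expansions from the identification \eqref{eG}, $E^{N,k}_{a,b}(\tau)=\beta_k G_{N,k,(a,b)}(N\tau)$, together with the elementary action of $SL_2(\mathbb{Z})$ on the index pair $(a,b)$ of Schoeneberg's lattice sum \eqref{G}. The only instance of that action needed for \eqref{fourier2} is the $S$-transformation $S=\left(\begin{smallmatrix}0&-1\\1&0\end{smallmatrix}\right)$: since $\tau\mapsto-1/\tau$ reindexes the summation lattice of \eqref{G} by $(m,n)\mapsto(n,-m)$, Schoeneberg's law specializes to $G_{N,k,(a,b)}(-1/\tau)=\tau^k G_{N,k,(b,-a)}(\tau)$ for $k>2$. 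I would prove \eqref{fourier1} first, directly and independently, and then feed it into the $S$-transformation to obtain \eqref{fourier2}.

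For \eqref{fourier1} I would argue straight from the $q$-expansion \eqref{E}. Writing $E_{a,b,c}(\tau)=E_{a,b}(\tfrac cN+\tau)$ sends $q^{mn}\mapsto\zeta_N^{mnc}q^{mn}$, and on the congruence class $m\equiv a$ (respectively $m\equiv-a$) the factor $\zeta_N^{mnc}$ becomes $\zeta_N^{anc}$ (respectively $\zeta_N^{-anc}$); hence both exponential sums in \eqref{E} pick up exactly the twist obtained by replacing $b$ with $-a'=ac+b$. Thus the non-constant part of $E_{a,b,c}$ agrees termwise with that of $E_{a,-a'}$. Because the real shift $\tfrac cN$ leaves $N\tau-N\overline{\tau}$ unchanged, the constant term carried by $E_{a,b,c}$ is precisely $\gamma_{a,b}(\tau)$, while $E_{a,-a'}$ carries $\gamma_{a,-a'}(\tau)$ by definition; restoring the discrepancy $\gamma_{a,b}(\tau)-\gamma_{a,-a'}(\tau)$ gives \eqref{fourier1}. (Using $-a'\equiv b\bmod N$ whenever $a\equiv0$, one sees this discrepancy in fact vanishes, but keeping it makes the statement parallel to \eqref{fourier2}.)

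For \eqref{fourier2} I would combine \eqref{fourier1} with the $S$-law. By \eqref{eG}, $E_{a,-a'}(\tau)=\beta_k G_{N,k,(a,-a')}(N\tau)$. Applying the $S$-transformation with index pair $(a',a)$ at the point $\sigma=N\tau$, the rule $(a',a)\mapsto(a,-a')$ gives $G_{N,k,(a',a)}(-1/(N\tau))=(N\tau)^k G_{N,k,(a,-a')}(N\tau)$. Since $-1/(N\tau)=N\cdot(-1/(N^2\tau))$, a second use of \eqref{eG} rewrites the left side as $\beta_k^{-1}E_{a',a}(-1/(N^2\tau))$, so that $E_{a,-a'}(\tau)\,(N\tau)^k=E_{a',a}(-1/(N^2\tau))$. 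Multiplying \eqref{fourier1} by $(N\tau)^k$ and inserting this identity produces \eqref{fourier2}, provided the leftover term $\big(\gamma_{a,b}(\tau)-\gamma_{a,-a'}(\tau)\big)(N\tau)^k$ equals $\gamma_{a',b'}(-1/(N^2\tau))-\gamma_{a',a}(-1/(N^2\tau))$. I would confirm this from the explicit formula for $\alpha^{N,k}_{a,b}$, using that $b'=a(c^2+1)+bc\equiv a\bmod N$ exactly when $a'=-ac-b\equiv0\bmod N$ (and similarly $-a'\equiv b$ when $a\equiv0$), so that both sides collapse to $0$; this is also the bookkeeping that secures the reference to \eqref{a dash}.

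The main obstacle is the hypothesis $k>2$ in Schoeneberg's transformation law: for $k=1,2$ the lattice sums are not absolutely convergent and the reindexing under $S$ is not literally valid. I would repair this by Hecke's trick, inserting $|m\tau+n|^{-2s}$, transforming in the half-plane of absolute convergence, and continuing analytically to $s=0$. The boundary contribution of the continuation is precisely the non-holomorphic piece $-\tfrac{2\pi i}{N^2(N\tau-N\overline{\tau})}$ that is built into $\gamma_{a,b}$ for $k=2$ (and the extra $\zeta(0,\cdot)$ correction for $k=1$); the delicate point is to verify that this piece transforms under $\tau\mapsto-1/(N^2\tau)$ in such a way that the completed series still satisfies $E_{a,-a'}(\tau)\,(N\tau)^k=E_{a',a}(-1/(N^2\tau))$, for which the key input is $\Im(-1/(N^2\tau))=\Im\tau/(N^2|\tau|^2)$. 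Once these completed weight-one and weight-two laws are established, the derivation above applies verbatim in all cases $k>0$.
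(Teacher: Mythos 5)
Your argument is correct, and for \eqref{fourier1} it is essentially the paper's proof verbatim: expand \eqref{E} at $\tau+\frac{c}{N}$, note that $\zeta_N^{mnc}=\zeta_N^{\pm acn}$ on the classes $m\equiv\pm a\bmod N$ (so the twist replaces $b$ by $-a'=ac+b$), and use that a real shift leaves the non-holomorphic $k=2$ term invariant. For \eqref{fourier2} your route differs in the decomposition. The paper applies Schoeneberg's transformation law once, with the single matrix $A=\left(\begin{smallmatrix} c & -c^2-1 \\ 1 & -c \end{smallmatrix}\right)$ of \eqref{gamma}, which yields $E_{a,b,c}(\tau)(N\tau)^k=E_{a',b',c}\big({-1}/(N^2\tau)\big)$ in one stroke, and then simply re-applies \eqref{fourier1} at the new argument; no constant-term bookkeeping arises because both sides remain complete series of the form $E_{\cdot,\cdot,c}$. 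You instead factor $A=T^cST^{-c}$, absorbing the translations into \eqref{fourier1} and invoking only the $S$-involution (equivalently the corollary \eqref{mod involution}); this buys a more elementary transformation input but forces the extra verification that $\big(\gamma_{a,b}(\tau)-\gamma_{a,-a'}(\tau)\big)(N\tau)^k$ matches $\gamma_{a',b'}-\gamma_{a',a}$ at $-1/(N^2\tau)$. Your verification is sound: the $k=2$ non-holomorphic pieces cancel within each difference, $\alpha^{N,k}_{a,b}$ depends only on the residues of $a,b$, and $a\equiv0\Rightarrow-a'\equiv b$, $a'\equiv0\Rightarrow b'\equiv a$ (since $b'-a=-ca'$), so both differences vanish identically --- a byproduct your route makes explicit, namely that the $\gamma$-discrepancies in the theorem are actually zero for $k>0$ (they are retained for parallelism with the $k=0$ generalization, where genuine constants do appear). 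Three small remarks. First, your ``exactly when'' is an overstatement: $b'\equiv a\bmod N$ iff $ca'\equiv0\bmod N$, which is weaker than $a'\equiv0$; only the implication you actually use is true. Second, your $S$-rule $G_{N,k,(a,b)}(-1/\tau)=\tau^kG_{N,k,(b,-a)}(\tau)$ is the correct specialization --- it is what the direct reindexing $(m,n)\mapsto(n,-m)$ gives and is consistent with \eqref{mod involution}; note that the index action as displayed in the paper, with $B^t$, would instead produce $(-b,a)$, differing by the sign $(-1)^k$, whereas the paper's own application of the law to $A$ corresponds to the action $(a,b)\mapsto(a,b)B$, matching your convention. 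Third, your Hecke-trick completion for $k=1,2$ is precisely the mechanism underlying the paper's wholesale citation of Schoeneberg's Chapter 7 (whose regularized series already carry the $-2\pi i/\big(N^2(\tau-\overline\tau)\big)$ term built into $\gamma_{a,b}$), so it supplies rigor the paper leaves implicit rather than a genuinely new idea.
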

If we take $c$ to be zero, we obtain the following corollary.
\begin{corollary}
	For any integers $a$ and $b$, $E_{a,b}=E^{N,k}_{a,b}$ satisfies
\begin{equation}
	E_{a,b}(\tau)(N\tau)^k=E_{-b,a}\bigg(\frac{-1}{N^2\tau}\bigg).
	\label{mod involution}
\end{equation}
\end{corollary}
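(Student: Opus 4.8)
The plan is to obtain this corollary as the advertised specialization $c=0$ of Theorem \ref{th4}, reading the identity off from the second Fourier expansion \eqref{fourier2}. The point is that the constant-term corrections appearing there cancel in this case, leaving exactly the claimed functional equation under $\tau \mapsto -1/(N^2\tau)$.

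First I would record the auxiliary quantities. By definition $a'=-ac-b$ and $b'=a(c^2+1)+bc$, so setting $c=0$ gives at once $a'=-b$ and $b'=a$. Simultaneously, from $E_{a,b,c}(\tau)=E_{a,b}\!\left(\tfrac{c}{N}+\tau\right)$ we get $E_{a,b,0}(\tau)=E_{a,b}(\tau)$. Substituting $c=0$ into the left-hand side of \eqref{fourier2} thus yields $E_{a,b}(\tau)(N\tau)^k$, while the leading Eisenstein term $E_{a',a}$ on the right becomes $E_{-b,a}$, evaluated at $-1/(N^2\tau)$, which is precisely the right-hand side of \eqref{mod involution}.

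The only thing that genuinely needs checking is that the two remaining constant terms in \eqref{fourier2} cancel. Since $c=0$ forces $b'=a$, the indices agree: $\gamma_{a',b'}=\gamma_{-b,a}$ and $\gamma_{a',a}=\gamma_{-b,a}$ are the same object, both being the constant term attached to the pair $(-b,a)$. Hence $\gamma_{a',b'}\!\left(-1/(N^2\tau)\right)-\gamma_{a',a}\!\left(-1/(N^2\tau)\right)=0$, and \eqref{fourier2} collapses to the asserted identity. As an internal consistency check, the first expansion \eqref{fourier1} simultaneously degenerates to the tautology $E_{a,b}(\tau)=E_{a,b}(\tau)$, because $-a'=b$ when $c=0$, confirming that no information is lost in the specialization.

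I expect no real obstacle here, since the corollary is entirely contained in Theorem \ref{th4}; the derivation is a clean substitution. The single point demanding a moment's care is verifying that the $b$-index $b'$ coincides with $a$ at $c=0$, which is immediate from $b'=a(c^2+1)+bc$, and hence that the two $\gamma$-corrections are identical and cancel. Everything else is bookkeeping of the indices $(a',b')=(-b,a)$.
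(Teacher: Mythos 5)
Your proposal is correct and is exactly the paper's route: the paper obtains the corollary by the single remark ``take $c=0$'' in Theorem \ref{th4}, i.e.\ specializing \eqref{fourier2} with $(a',b')=(-b,a)$ so that $\gamma_{a',b'}=\gamma_{a',a}$ and the two constant-term corrections cancel. You have merely spelled out this substitution and cancellation (plus the harmless consistency check on \eqref{fourier1}), which is precisely the intended argument.
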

Below we make occasional use of the identity
$$E^{N,k}_{-a,-b}(\tau)=(-1)^k E^{N,k}_{a,b}(\tau),$$
which is an immediate consequence of the definition of $E^{N,k}_{a,b}$.

\begin{remark} One can in fact generalize the definition of Eisenstein series and the above theorem about the expansions to the case $k=0$. The Eisenstein series we then obtain are the logarithms of generalized Dedekind eta functions
\begin{align*}
g_{a,b}(\tau)=q^{B(a/N)/2}\prod_{\substack{m\ge1\\m\equiv a\bmod N}}(1-\zeta_N^{b}q^{m})\prod_{\substack{m\ge1\\m\equiv -a\bmod N}}(1-\zeta_N^{-b}q^{m}),
\label{g}
\\
\text{ where } B(x)=\{x\}^2-\{x\}+1/6,
\end{align*}
and by applying \cite[Theorem 1]{Ya04} with $\gamma=\left(\begin{smallmatrix} c & -c^2-1 \\ 1 & -c \end{smallmatrix}\right)$ implies that similar formulas to \eqref{fourier1} and \eqref{fourier2} hold. The only difference is that there is a contribution of a constant term on the right-hand sides of \eqref{fourier1} and \eqref{fourier2} which has to be accounted for.
\end{remark}

In \cite[Chapter 7]{Sch74}, the following expansions for $G_{N,k,(a,b)}$ in \eqref{G} are given:
\begin{align*}
G_{N,k,(a,b)}(\tau)
&=\alpha^{N,k}_{a,b}+\frac{1}{\beta_{k}} \sum_{m\equiv a \bmod N} \sum_{nm>0}n^{k-1}\cdot \text{sgn} \: n\cdot e^{\frac{2\pi i}N (bn+\tau nm)}
\\ &\quad
-\delta_{k,2}\frac{2\pi i}{N^2(\tau - \overline{\tau})}
\\ &
=\alpha^{N,k}_{a,b}
\\ &\quad
+\frac{1}{\beta_{k}}\bigg(\sum_{\substack{m,n\ge1\\m\equiv a \bmod N}}n^{k-1} e^{\frac{2\pi i}N (bn+\tau nm)}
+(-1)^k\sum_{\substack{m,n\ge1\\m\equiv -a \bmod N}}n^{k-1} e^{\frac{2\pi i}N (-bn+\tau nm)} \bigg)
\\ &\quad
-\delta_{k,2}\frac{2\pi i}{N^2(\tau - \overline{\tau})},
\end{align*}
where $\delta$ is the Kronecker delta function.
Thus, $E_{a,b}(\tau)=\beta_k G_{N,k,(a,b)}(N\tau)$ as previously asserted in \eqref{eG}.

\begin{proof}
We now derive Fourier expansions of $E_{a,b,c}^{N,k}(\tau)$ for $k>0$, in terms of $\tau$ and in terms of $-\frac{1}{N^2\tau}$.

\medskip
\noindent
(a) \textsl{Expansion in $\tau$}.
The Fourier expansion \eqref{fourier1} is obtained simply by writing out the definition of $E_{a,b}$ if $k\neq 2$:
\begin{align*}
E_{a,b,c}^{N,k}(\tau)
&=\beta_{k}\alpha^{N,k}_{a,b}
\\ &\quad
+\bigg(\sum_{\substack{m,n\ge1\\m\equiv a \bmod N}}n^{k-1} e^{\frac{2\pi i}N (bn+(c+N\tau) nm)}
+(-1)^k\sum_{\substack{m,n\ge1\\m\equiv -a \bmod N}}n^{k-1} e^{\frac{2\pi i}N (-bn+(c+N\tau) nm)} \bigg)
\\ &
=\beta_{k}\alpha^{N,k}_{a,b}
\\ &\quad
+\bigg(\sum_{\substack{m,n\ge1\\m\equiv a \bmod N}}n^{k-1} \zeta_{N}^{bn+acn}q^{mn}
+(-1)^k\sum_{\substack{m,n\ge1\\m\equiv -a \bmod N}}n^{k-1} \zeta_{N}^{-bn-acn}q^{mn}\bigg).
\end{align*}
When $k=2$ we have the same expression, with the extra term
$$-\beta_{2}\frac{2 \pi i}{N^2((c+N\tau)-(\overline{c+N\tau}))}=-\beta_{2}\frac{2 \pi i}{N^2 (N\tau -N\overline{\tau})}=-\beta_{2}\frac{\pi}{N^3 \Im(\tau)}$$
included. This establishes \eqref{fourier1} for any $k>0$.

\medskip
\noindent
(b) \textsl{Expansion in $\frac{-1}{N^2\tau}$}.
Now we will derive an Fourier expansion in terms of $\frac{1}{N^2\tau}$.
For every $B=\bigl(\begin{smallmatrix} b_{11} & b_{12} \\ b_{21} & b_{22} \end{smallmatrix}\bigr) \in \Gamma$, we have $$G_{N,k,(a,b)}(B\tau)=(b_{21}\tau+b_{22})^{k}G_{N,k,(a,b)B^{t}}(\tau)$$
(see \cite[Chapter 7]{Sch74}).
Now if we define
\begin{equation}
A=\left(\begin{matrix} c & -c^2-1 \\ 1 & -c \end{matrix}\right)
\label{gamma},
\end{equation} so that
\begin{equation}
(\begin{matrix} a' & b' \end{matrix})=(\begin{matrix} a & b \end{matrix})\left(\begin{matrix} c & -c^2-1 \\ 1 & -c \end{matrix}\right)^{-1}
=(\begin{matrix} -ac-b & a(c^2+1)+bc \end{matrix}), \label{a dash}
\end{equation}
then
 $$G_{N,k,(a',b')}\left(\frac{c\tau-(c^2+1)}{\tau-c}\right)=G_{N,k,(a',b')}(A\tau)=G_{N,k,(a,b)}(\tau)(\tau-c)^{k}.$$
Substituting $c+N\tau$ for $\tau$, we obtain
\begin{align*}
G_{N,k,(a,b)}(c+N\tau)(N\tau)^{k} &=G_{N,k,(a',b')}\bigg(\frac{c(c+N\tau)-(c^2+1)}{c+N\tau-c}\bigg)
\\ &
=G_{N,k,(a',b')}\bigg(c-N\frac{1}{N^2\tau}\bigg)
\end{align*}
meaning that
$$E_{a,b,c}(\tau)(N\tau)^{k}=E_{a',b',c}\bigg(-\frac{1}{N^2\tau}\bigg).$$
Using the first Fourier expansion \eqref{fourier1} for $E_{a',b',c}$, we obtain
\begin{align*}
E_{a,b,c}^{N,k}(\tau)(N\tau)^{k} &=\beta_{k}\alpha^{N,k}_{a,b}
\\ &\quad
+\sum_{\substack{m,n\ge1\\m\equiv a' \bmod N}}n^{k-1} \zeta_{N}^{b'n+a'cn}\exp\bigg(-\frac{-2\pi mn i}{N^2\tau}\bigg)
\\ &\quad
+(-1)^k\sum_{\substack{m,n\ge1\\m\equiv -a' \bmod N}}n^{k-1} \zeta_{N}^{-b'n-a'cn}\exp\bigg(\frac{-2\pi mn i}{N^2\tau}\bigg),
\end{align*}
which is precisely \eqref{fourier2}.
\end{proof}

More generally, we may consider series of the form
\begin{equation}
S(\tau)=\sum_{n,m\geq 1}n^{k-1}f(n)g(m)q^{mn}, \label{S sum}
\end{equation}
with $f$ and $g$ both $N$-periodic and satisfying the parity constraint
$$f(-a)g(-b)=(-1)^k f(a)g(b)\text{ for all integers }a,b.$$
One advantage of the Eisenstein series introduced here is that it allows us to represent $S(\tau)$ as a linear combination of $E_{a,b}^{N,k}(\tau)$, up to a linear combination of `constants' $\gamma_{a,b}(\tau)$, by using (the inverse of) the finite Fourier transform. Here we define the inverse finite Fourier transform $\widehat{f}$ of $f$ by
$$\widehat{f}(n)=\frac{1}{N}\sum_{a \bmod N}\zeta_N^{-an}f(a).$$
It is known (and easily checked) that this transform satisfies
$$f(n)=\sum_{a \bmod N}\zeta_N^{an}\widehat{f}(a).$$
We will use this property to establish the following result.

\begin{lemma}
\label{lem1}
For any two $N$-periodic functions satisfying
$$f(-a)g(-b)=(-1)^k f(a)g(b)\text{ for all integers }a,b,$$
the $q$-expansions of $S(\tau)$ and the Eisenstein series $$\frac{1}{2}\sum_{a,b \bmod N}\widehat{f}(b)g(a)E^{N,k}_{a,b}$$ coincide. In other words, the identity
\begin{equation}
\sum_{n,m\geq 1}n^{k-1}f(n)g(m)q^{mn}=\frac{1}{2}\sum_{a,b \bmod N}\widehat{f}(b)g(a)(E^{N,k}_{a,b}(\tau)-\gamma^{N,k}_{a,b}(\tau))
\label{double sum}
\end{equation}
takes place.
\end{lemma}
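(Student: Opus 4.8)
The plan is to verify the identity by a direct computation: expand the right-hand side, strip off the constant terms, interchange the (finite) sum over residues with the (infinite) sum over $m,n$, and then apply the Fourier inversion formula together with the parity hypothesis. First I would use the definition \eqref{E} to write
$$E^{N,k}_{a,b}(\tau)-\gamma^{N,k}_{a,b}(\tau)=\sum_{\substack{n,m\ge 1\\ m\equiv a \bmod N}}\zeta_N^{bn}n^{k-1}q^{mn}+(-1)^k\sum_{\substack{n,m\ge 1\\ m\equiv -a \bmod N}}\zeta_N^{-bn}n^{k-1}q^{mn},$$
noting that subtracting $\gamma^{N,k}_{a,b}$ removes the (possibly non-holomorphic, when $k=2$) constant term uniformly in $k$, leaving only the pure $q$-series. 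Substituting this into $\tfrac12\sum_{a,b}\widehat f(b)g(a)\big(E^{N,k}_{a,b}-\gamma^{N,k}_{a,b}\big)$ splits the right-hand side into two double sums, one from each of the pieces above.

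For the first piece I would interchange the order of summation, pulling the sum over $a,b \bmod N$ inside. Since $g$ is $N$-periodic and the constraint $m\equiv a \bmod N$ forces $a\equiv m$, the factor $g(a)$ becomes $g(m)$, while the inner sum over $b$ collapses through the inversion property $\sum_{b}\widehat f(b)\zeta_N^{bn}=f(n)$. This turns the first piece into $\tfrac12\sum_{n,m\ge 1}n^{k-1}f(n)g(m)q^{mn}=\tfrac12 S(\tau)$. For the second piece the same manipulation uses $m\equiv -a$, so $g(a)=g(-m)$, and the inner sum over $b$ now yields $\sum_b \widehat f(b)\zeta_N^{-bn}=f(-n)$, giving $\tfrac{(-1)^k}{2}\sum_{n,m\ge 1}n^{k-1}f(-n)g(-m)q^{mn}$. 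This is the one place where the hypothesis enters: the parity constraint $f(-n)g(-m)=(-1)^k f(n)g(m)$ converts the second piece into $\tfrac12 S(\tau)$ as well, and adding the two halves produces exactly $S(\tau)$.

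Since the argument is essentially mechanical, I do not anticipate a serious obstacle; the step requiring the most care is the justification for rearranging the doubly infinite sum. Because $|q|<1$, all of the series converge absolutely and uniformly on compact subsets of the upper half-plane, which legitimizes both the interchange of the finite sum over residues with the infinite sum over $m,n$ and the regrouping into the two pieces. I would also double-check the bookkeeping in the two applications of the inversion formula (at $n$ versus at $-n$): the sign difference in $\zeta_N^{\pm bn}$ is precisely what pairs with the $(-1)^k$ and the parity hypothesis to make the two pieces coincide, so getting these signs straight is the only genuinely error-prone point.
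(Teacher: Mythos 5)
Your proof is correct, and it runs the computation in the opposite direction from the paper, with one genuine structural difference in where the hypothesis enters. The paper starts from $S(\tau)$, sorts $m$ into residue classes, applies inversion to replace $f(n)$ by $\sum_b \widehat f(b)\zeta_N^{bn}$, and then \emph{symmetrizes} the single resulting sum into the two-term combination appearing in $E_{a,b}-\gamma_{a,b}$ by re-indexing $(a,b)\mapsto(-a,-b)$. That symmetrization requires the parity relation at the level of the transform, $\widehat f(-a)g(-b)=(-1)^k\widehat f(a)g(b)$, which is not immediate from the hypothesis; the paper's first paragraph is devoted to deriving it, by first showing (via a nonvanishing pair $f(\pm n), g(\pm m)\neq 0$) that $f$ and $g$ are each individually even or odd, whence $\widehat f$ inherits the parity of $f$. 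Your route --- expanding the right-hand side, collapsing the $b$-sum via $\sum_b\widehat f(b)\zeta_N^{\pm bn}=f(\pm n)$, and invoking the hypothesis termwise as $f(-n)g(-m)=(-1)^kf(n)g(m)$ --- bypasses that preliminary step entirely, since the hypothesis is stated for all integer pairs and you only ever need it at $(n,m)$. This makes your argument slightly shorter and more self-contained; what the paper's direction buys is the explicit intermediate fact about the parity of $f$, $g$ and $\widehat f$, which is of independent bookkeeping value when one uses the transform elsewhere (as in Section 4, where $\widetilde E_{d,b}$ and the rewriting of $S(\tau)$ in transformed coordinates reuse exactly this kind of symmetry). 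Your convergence remarks (absolute convergence for $|q|<1$, finiteness of the residue sums) are the right justification for the interchanges, and your sign bookkeeping at $\zeta_N^{-bn}$ versus $(-1)^k$ is exactly right: the two factors of $(-1)^k$ cancel, so each piece contributes $\tfrac12 S(\tau)$.
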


\begin{proof}
We can assume without loss of generality that $g$ and $f$ are not identically zero.
By the imposed relation it follows that there are $n$, $m$ such that
$g(m),\allowbreak g(-m),f(n),f(-n)\neq 0$. Then $$f(-a)=(-1)^k \frac{g(-m)}{g(m)}f(a)=(-1)^k \frac{g(m)}{g(-m)}f(a),$$ so $f(a)=f(-a)$ for all integers $a$ or $f(a)=-f(-a)$ for all integers $a$. By symmetry the same property also holds for $g$. By writing out the definition of $\widehat{f}$ we find that
$$\widehat{f}(-a)g(-b)=(-1)^k\widehat{f}(a)g(b)\text{ for all integers }a,b.$$

Now we use this property to prove the identity:
\begin{align*}
\sum_{n,m\geq 1}f(n)g(m)n^{k-1}q^{mn}&=
\sum_{a\bmod N}\sum_{\substack{m, n\ge1\\m\equiv a\bmod N}}f(n)g(m)n^{k-1}q^{mn}
\\ &=\sum_{a,b\bmod N}\sum_{\substack{m, n\ge1\\m\equiv a\bmod N}}g(a)\widehat{f}(b)\zeta_N^{bn}n^{k-1}q^{mn}
\\ &=\sum_{a,b\bmod N}\widehat{f}(b)g(a)\sum_{\substack{m, n\ge1\\m\equiv a\bmod N}}\zeta_N^{bn}n^{k-1}q^{mn}
\\ &=\frac{1}{2}\sum_{a,b\bmod N}\widehat{f}(b)g(a)
\\ & \quad \times
\bigg(\sum_{\substack{m, n\ge1\\m\equiv a\bmod N}}\zeta_N^{bn}n^{k-1}q^{mn}+(-1)^k\sum_{\substack{m, n\ge1\\m\equiv -a\bmod N}}\zeta_N^{-bn}n^{k-1}q^{mn}\bigg)
\\ &=\frac{1}{2}\sum_{a,b\bmod N}\widehat{f}(b)g(a)(E^{N,k}_{a,b}-\gamma^{N,k}_{a,b}). \qedhere
\end{align*}
\end{proof}

\begin{remark}
The proof does not actually use the fact that $k$ is a nonnegative integer: this lemma is valid for general Eisenstein series of integral weight $k$.
\end{remark}

\section{The \textit{L}--value at 4 for conductor 32} \label{L value at 4}
In \cite{Zu13}, the $L$-values at $2$ and $3$ of an elliptic curve of conductor $32$ are explicitly expressed as periods, and there is a general outline of how to derive such results. We will use this to compute a representation of the $L$-value at $4$ of the elliptic curve as a period.
In this section we use the Dedekind eta function
$$\eta(\tau):=q^{1/24}\prod_{m=1}^{\infty}(1-q^m)=\sum_{n=-\infty}^{\infty}(-1)^n q^{(6n+1)^2/24};$$
which possesses the modular involution
\begin{equation}
\eta(-1/\tau)=\sqrt{-i\tau}\,\eta(\tau).
\label{dedekind}
\end{equation}
We also set $\eta_k(\tau):=\eta(k\tau)$ for short.

Recall that for a conductor $32$ elliptic curve $E$, the $L$-series is known to coincide with that for
the cusp form $f(\tau)=\eta_4^2 \eta_8^2$. This will be shown to be a product of Eisenstein series.

We have the following (Lambert series) expansion:
$$\frac{\eta_8^4}{\eta_4^2}=\sum_{m\geq 1}\bigg(\frac{-4}{m}\bigg)\frac{q^m}{1-q^{2m}}=\sum_{m, n\geq 1}a(m)b(n)q^{mn},$$ where $a(m):=\left(\frac{-4}{m}\right)$ and $b(n):=n \bmod 2$ are as in \cite{Zu13}.
This expansion can be obtained using Dirichlet convolution; in this case $$a(m)=\sum_{\substack{n\mid m \\ \frac{m}{n}{\text{ odd}}}}c(n)\mu\left(\frac{m}{n}\right),$$
where $c(n)$ is the $n$-th term in the $q$-expansion on the left and $\mu$ is the M\" obius function.
Combining this with the identity $$\eta_4^2\eta_8^2=\frac{\eta_8^4}{\eta_4^2}\frac{\eta_4^4}{\eta_8^2}$$ and using the modular involution \eqref{dedekind} we obtain
\begin{equation}
f(it)=\frac{1}{2t}\sum_{m_1,n_1\geq 1}a(m_1)b(n_1)e^{-2\pi m_1n_1t}\sum_{m_2,n_2\geq 1}a(m_2)b(n_2)e^{-2\pi m_2 n_2/(32t)}. \label{f(i)}
\end{equation}

\begin{proof}[Proof of Theorem \textup{\ref{L(E,4)}}]
We apply the above identity to the $L$-value at 4:
\begin{align*}
L(E,4)&=L(f,4)=\frac{1}{6}\int_{0}^{1} f \log^3 q \frac{\d q}{q}=
-\frac{(2\pi)^4}{6}\int_{0}^{\infty}f(it)t^3\d t
\\ &=-\frac{(2\pi)^4}{2\cdot 6}\sum_{m_1,n_1,m_2,n_2\geq 1}a(m_1)b(n_1)b(m_2)a(n_2)
\\ & \quad \times \int_{0}^{\infty}\exp\bigg(-2\pi\bigg(m_1 n_1 t+\frac{m_2 n_2}{32t}\bigg)\bigg)t^2\d t.
\end{align*}
Performing the change of variable $t=\frac{n_2}{n_1}u$ in each of the integrals yields
\begin{align*}
L(E,4)&=-\frac{4}{3}\pi^4\sum_{m_1,n_1,m_2,n_2\geq 1}a(m_1)b(n_1)b(m_2)a(n_2)\frac{n_2^3}{n_1^3}
\\ & \quad \times \int_{0}^{\infty}\exp\bigg(-2\pi\bigg(m_1 n_2 u+\frac{m_2 n_1}{32u}\bigg)\bigg)u^2\d u
\\ &=-\frac{4}{3}\pi^4\int_{0}^{\infty}\sum_{m_1,n_2\geq 1}n_2^3 a(m_1)a(n_2)\exp(-2\pi m_1 n_2 u)
\\ & \quad \times \sum_{m_2,n_1\geq 1}\frac{b(m_2)b(n_1)}{n_1^3}\exp\bigg(\frac{-2\pi m_2 n_1}{32u}\bigg)u^2\d u.
\end{align*}
Now we make another change of variable $v=\frac{1}{32u}$, so that $$u^2 \d u=-\frac{\d v}{2^{15}v^4}.$$
We have
\begin{align*}
L(E,4)&=-\frac{\pi^4}{3\cdot 2^{13}}\int_{0}^{\infty}\sum_{m_1,n_2\geq 1}n_2^3 a(m_1)a(n_2)\exp\bigg(\frac{-2\pi m_1 n_2}{32v}\bigg)
\\ & \quad \times \sum_{m_2,n_1\geq 1}\frac{b(m_2)b(n_1)}{n_1^3}\exp(-2\pi m_2 n_1 v)\frac{\d v}{v^4}
\\ &=-\frac{\pi^4}{3\cdot 2^{13}}\int_{0}^{\infty}\cE_1(i/32v)(\delta^{-3}\cE_2)(iv)\frac{\d v}{v^4},
\end{align*}
where
\label{F1}
$$\cE_1(\tau):=\sum_{n,m\geq 1}\bigg(\frac{-4}{mn}\bigg)n^3 q^{mn}$$
and
$$
(\delta^{-3}\cE_2)(\tau):=\sum_{m,n\geq 1}\frac{b(m)b(n)}{n^3}q^{mn}=\sum_{\substack{n\geq 1 \\ n\text{ odd}}}\frac{q^n}{n^3(1-q^{2n})}.
$$
We now proceed by expressing $\cE_1(-1/(32\tau))$ as an eta quotient and $(\delta^{-3}\cE_2)(\tau)$ as an eta quotient multiplied with the composition of an eta quotient and a hypergeometric series.
We have
$$\cE_1(\tau)=
\frac{\eta_4^{16}}{\eta_2^4 \eta_8^4}-32\frac{\eta_2^4\eta_8^{12}}{\eta_4^8},$$
thus
\begin{align*}
v^{-4}\cE_1(\tau)|_{\tau=-1/(32v)}
&=\biggl(2^{12}\frac{\eta_8^{16}}{\eta_4^4\eta_{16}^4}-32\cdot2^8\frac{\eta_4^{12}\eta_{16}^4}{\eta_8^8}\biggr)\bigg|_{\tau=v}
\\
&=-2^{12}\cE_1(2\tau)|_{\tau=v}.
\end{align*}

If we define $\tilde{x}(\tau)=4\eta_8^{4}/\eta_2^4$ and then also the modular function
$$
X(\tau)=\tilde x(\tau)\cdot(1+\tilde x(\tau)^2)^{1/2}
=\frac{4\eta_4^{12}}{\eta_2^{12}},
$$
then we have, by Duke's formula \cite[eq.~(2$\cdot$6)]{Du08},
$$
(\delta^{-3}\cE_2)(\tau)=\frac{X(\tau)\cdot H(-4X(\tau)^2)}{4\Lambda(\tau)},
$$
where $$H(z)={}_4F_3\biggl(\begin{matrix} 1, \, 1, \, 1, \, 1 \\ \frac32, \, \frac32, \, \frac32 \end{matrix}\biggm|z\biggr)$$
and 
$$
\Lambda(\tau)
={}_3F_2\biggl(\begin{matrix} \frac12, \, \frac12, \, \frac12 \\ 1, \, 1 \end{matrix}\biggm|-4X(\tau)^2\biggr)
={}_2F_1\biggl(\begin{matrix} \frac12, \, \frac12 \\ 1 \end{matrix}\biggm|-\tilde x(\tau)^2\biggr)^2
=\frac{\eta_2^8}{\eta_4^4}
$$
by Clausen's formula \cite[eq.~(2.5.7)]{Sl66}.
Now, using this, we can write
\begin{align*}
L(E,4)&=\frac{\pi^4}{6}\int_{0}^{\infty}\cE_1(2\tau)
(\delta^{-3}\cE_2)(\tau)|_{\tau=iv}\d v
\\
&=\frac{\pi^4}{24}\int_{0}^{\infty}\bigg(\frac{\eta_8^{16}}{\eta_4^4 \eta_{16}^4}-32\frac{\eta_4^4\eta_{16}^{12}}{\eta_8^8}\bigg)\frac{\eta_4^{16}}{\eta_2^{20}}H(-4X(\tau)^2)|_{\tau=iv}\d v.
\end{align*}

If $x(\tau)=\frac{4\eta_2^4 \eta_8^8}{\eta_4^{12}}$, then 
$$\d x=\frac{4\eta_2^{12} \eta_8^8}{\eta_4^{16}}\frac{\d q}{q}=\frac{4\eta_2^{12} \eta_8^8}{\eta_4^{16}}\,2\pi i\d\tau.$$
Also note that we have $\tilde{x}=x/\sqrt{1-x^2}$.
\begin{remark}
The modular function $x$ satisfies $x(\tau)=z(2\tau)$, where $z=\frac{4\eta_1^4 \eta_4^8}{\eta_2^{12}}$ is the modular function we use later when computing the $L$-values for higher weights.
\end{remark}
By the change of variable, the differential form $2\pi \cE_1(2iv)(\delta^{-3}\cE_2)(iv)\d v$ transforms into
\begin{align*}
2\pi i \cE_1(2\tau)(\delta^{-3}\cE_2)(\tau)\d \tau&=\frac{1}{4}\bigg(\frac{\eta_8^{16}}{\eta_4^4 \eta_{16}^4}-32\frac{\eta_4^4\eta_{16}^{12}}{\eta_8^8}\bigg) \frac{\eta_4^{16}}{\eta_2^{20}}\times \frac{\eta_4^{16}}{\eta_2^{12}\eta_8^8} H(-4X^2)\d x
\\&=
\frac{x^4-2(1-\sqrt{1-x^2})^4}{128(1-x^2)^{11/4}(1-\sqrt{1-x^2})}\times H(-4X^2)\d x,
\end{align*}
where we use $$\bigg(\frac{\eta_8}{\eta_2}\bigg)^8=\frac{\tilde{x}^2}{16}=\frac{x^2}{16(1-x^2)},$$
$$\bigg(\frac{\eta_4}{\eta_2}\bigg)^4=\bigg(\frac{x}{4(1-x^2)}\bigg)^{1/3},$$
and
$$\bigg(\frac{\eta_{16}}{\eta_2}\bigg)^4=(1-\sqrt{1-x^2})\bigg(\frac{x}{2^{11}(1-x^2)^{7/4}}\bigg)^{1/3}$$
(since $4(\frac{\eta_4}{\eta_2})^{12}\cdot x=\tilde{x}^2$ and $1-\sqrt{1-x^2}=8\frac{\eta_2^4 \eta_8^2 \eta_{16}^4}{\eta_4^{10}}$).

When $\tau$ goes from $i\infty$ to $0$, the modular function $x(\tau)$ ranges from $0$ to $1$.
Taking $y=\sqrt{1-x^2}$, so that $$X=\frac{x}{1-x^2}=\frac{\sqrt{1-y^2}}{y^2}$$ we obtain
\begin{align*}
L(E,4)&=\frac{\pi^3}{1536}\int_{0}^{1} \frac{x^4-2(1-\sqrt{1-x^2})^4}{(1-x^2)^{11/4}(1-\sqrt{1-x^2})}\times H(-4X^2)\d x
\\ &=\frac{\pi^3}{1536}\int_{0}^{1} \frac{(1-6y+y^2)\sqrt{1-y}}{\sqrt{y^9(1+y)}}H\left(\frac{4y^2-4}{y^4}\right)\d y
\\ &
=\frac{\pi^3}{1536}\int_{0}^{1} \frac{(1-6y+y^2)\sqrt{1-y}}{\sqrt{y(1+y)}}
\iiint\limits_{[0,1]^3}\frac{\d y \d y_1 \d y_2 \d y_3}{y^4+4(1-y^2)(1-y_1^2)(1-y_2^2)(1-y_3^2)},
\end{align*}
where the integral representation for ${}_4 F_3$ from \cite[\S\,4.1]{Sl66} was used.
\end{proof}

\section{General \textit{L}--values for conductor 32 in terms of Eisenstein series}
\label{L value at k}
In this section, we show how $L(E,k)$ can be written as an integral of Eisenstein series.
In order to carry out this computation, we define the partial Fourier transform (consistent with the finite Fourier transform given earlier) $\widetilde{E}_{d,b}=\widetilde{E}_{d,b}^{N,k}$ of an Eisenstein series:
$$\widetilde{E}_{d,b}^{N,k}=\sum_{a\bmod N}\zeta_N^{da}E_{a,b}^{N,k}.$$
Note that we also have the inverse transform
$$E_{a,b}^{N,k}=\frac{1}{N}\sum_{d\bmod N}\zeta_N^{-da}\widetilde{E}_{d,b}^{N,k}.$$
The functions $\widetilde{E}_{d,b}$ have a simpler series representation, which will help us expressing modular forms in terms of the Eisenstein series.

\begin{lemma}
For any $N$, $k$, $d$ and $b$ the following holds:
\begin{align}
\widetilde{E}_{d,b}^{N,k}(\tau)=\widetilde{\gamma}_{d,b}(\tau)+\sum_{n,m\geq 1}\zeta_N^{dm+bn}n^{k-1}q^{mn}
+(-1)^{k}\sum_{n,m \geq 1}\zeta_N^{-dm-bn}n^{k-1}q^{mn};
\label{partial Fourier}
\end{align}
where
$$\widetilde{\gamma}_{d,b}(\tau)=\beta_{k}\sump_{m\equiv b\bmod N}m^{-k},$$
unless both $k=2$ and $d\equiv 0\bmod N$, in which case
$$\widetilde{\gamma}_{d,b}(\tau)=\beta_{2}\left(\sump_{m\equiv b\bmod N}m^{-2}-\frac{2\pi i}{N^2(\tau-\overline{\tau})}\right).$$
\end{lemma}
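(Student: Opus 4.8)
The plan is to prove this purely by unwinding definitions: insert the expansion \eqref{E} of $E_{a,b}^{N,k}$ into $\widetilde E_{d,b}^{N,k}=\sum_{a\bmod N}\zeta_N^{da}E_{a,b}^{N,k}$, interchange the finite sum over $a$ with the $q$-expansion sums, and collapse the congruence conditions using the orthogonality relation $\sum_{a\bmod N}\zeta_N^{da}=N$ when $d\equiv 0\bmod N$ and $0$ otherwise. No analytic input beyond the definitions already in place is required.

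First I would dispose of the two Lambert-type sums. In the first sum the inner condition $m\equiv a\bmod N$ forces $\zeta_N^{da}=\zeta_N^{dm}$, so after pulling the sum over $a$ inside, the pairs $(a,m)$ with $m\equiv a$ range over all $m\geq 1$ exactly once, giving $\sum_{n,m\geq 1}\zeta_N^{dm+bn}n^{k-1}q^{mn}$. The second sum is identical, with $m\equiv -a$ yielding $\zeta_N^{da}=\zeta_N^{-dm}$ and producing $(-1)^k\sum_{n,m\geq 1}\zeta_N^{-dm-bn}n^{k-1}q^{mn}$. These are exactly the two $q$-series in \eqref{partial Fourier}, so the holomorphic part of the claim is immediate.

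It then remains to identify the constant term $\widetilde\gamma_{d,b}=\sum_{a\bmod N}\zeta_N^{da}\gamma_{a,b}$. For $k\neq 2$ we have $\gamma_{a,b}=\beta_k\alpha^{N,k}_{a,b}$, and since $\alpha^{N,k}_{a,b}=0$ unless $a\equiv 0\bmod N$, only the $a\equiv 0$ term survives, contributing the weight $\zeta_N^{0}=1$. This gives $\widetilde\gamma_{d,b}=\beta_k\alpha^{N,k}_{0,b}=\beta_k\sump_{m\equiv b\bmod N}m^{-k}$, the last equality being the very definition of $\alpha^{N,k}_{0,b}$; in particular, the $d$-independence for $k\neq 2$ is automatic. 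I would note here the one regularization subtlety for $k=1$: the $a$-dependent Hurwitz-zeta term in $\alpha^{N,1}_{a,b}$ vanishes at $a\equiv 0$ because $\zeta(0,0)-\zeta(0,0)=0$, so it contributes nothing and no extra care is needed.

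The only genuinely delicate case, and the one where the bookkeeping matters, is $k=2$. Here $\gamma_{a,b}=\beta_2\bigl(\alpha^{N,2}_{a,b}-\frac{2\pi i}{N^2(N\tau-N\overline\tau)}\bigr)$, and the crucial observation is that the non-holomorphic term is independent of $a$. The holomorphic part behaves exactly as before, contributing $\beta_2\alpha^{N,2}_{0,b}$, while the non-holomorphic part factors as $-\beta_2\frac{2\pi i}{N^2(N\tau-N\overline\tau)}\sum_{a\bmod N}\zeta_N^{da}$, where orthogonality is decisive: the sum equals $N$ precisely when $d\equiv 0\bmod N$ and vanishes otherwise. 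When $d\equiv 0\bmod N$ the factor $N$ turns $N^2(N\tau-N\overline\tau)=N^3(\tau-\overline\tau)$ into $N^2(\tau-\overline\tau)$, reproducing the term $-\beta_2\frac{2\pi i}{N^2(\tau-\overline\tau)}$; when $d\not\equiv 0\bmod N$ the non-holomorphic contribution disappears entirely and $\widetilde\gamma_{d,b}$ reverts to the generic $\beta_2\sump_{m\equiv b\bmod N}m^{-2}$. This matches the two cases in the statement and finishes the proof.
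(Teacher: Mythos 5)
Your proof is correct and follows exactly the route the paper intends: the paper dismisses this lemma with the single remark that the expansion ``can be found by writing out the $E_{a,b}^{N,k}$ in the sum and gathering terms,'' and your argument is precisely that computation carried out in full, including the two points the paper leaves implicit --- the vanishing of the $a$-dependent Hurwitz-zeta term in $\alpha^{N,1}_{a,b}$ at $a\equiv 0$, and the orthogonality relation $\sum_{a\bmod N}\zeta_N^{da}$ that isolates the non-holomorphic term when $k=2$ and $d\equiv 0\bmod N$. Nothing further is needed.
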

This expansion can be found by writing out the $E_{a,b}^{N,k}$ in the sum and gathering terms.
Using \eqref{double sum} allows us to write $S$ in \eqref{S sum} in the following way:
$$S(\tau)=\frac{1}{2}\sum_{a,b\bmod N}\widetilde{f}(a)\widetilde{g}(b)(\widetilde{E}_{a,b}(\tau)-\widetilde{\gamma}_{a,b}).$$

Applying the machinery developed and a formula used in \cite{Ro13}, we can now express $L(E,k)$ in terms of Eisenstein series:
\begin{lemma}
The $L$-value of an elliptic curve $E$ of conductor $32$ at $k\geq 2$ equals
	
\begin{align}
L(E,k)
&=\frac{8\pi^{2k-1}i^k}{(k-1)!(k-2)!}\int_0^\infty \int_v^\infty (z-v)^{k-2}\cE_1(2iv) \cE_2(iz)\d z \d v,
\label{intermediate L}
\end{align}
where $\cE_1$ and $\cE_2$ are Eisenstein series of weight $k$. Explicitly, for even $k$:
$$\cE_1=\tfrac{i}{2}(E_{1,-1}^{4,k}-E_{1,1}^{4,k})\quad \text{and}\quad
\cE_2=\tfrac{1}{4}(E_{1,0}^{2,k}-E_{1,1}^{2,k}),$$
and for odd $k$:
$$\cE_1=\tfrac{i}{2}(E_{1,1}^{4,k}+E_{1,-1}^{4,k}) \quad \text{and} \quad
\cE_2=\tfrac{i}{4}(\widetilde{E}_{1,2}^{4,k}-\widetilde{E}_{1,0}^{4,k})=\tfrac{i}{4}\sum_{a\bmod 4}i^{-a}(E_{a,0}^{4,k}-E_{a,2}^{4,k}).$$
\end{lemma}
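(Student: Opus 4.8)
The plan is to follow the Rogers--Zudilin scheme already used for $L(E,4)$ in the previous section, but to stop one step earlier so that the two factors are recognised as weight-$k$ Eisenstein series instead of being evaluated explicitly. First I would insert the product decomposition \eqref{f(i)} of $f(it)$ into the integral representation \eqref{L-function} and obtain
$$L(E,k)=\frac{(-1)^{k-1}(2\pi)^k}{2(k-1)!}\sum_{m_1,n_1,m_2,n_2\ge1}a(m_1)b(n_1)a(m_2)b(n_2)\int_0^\infty e^{-2\pi(m_1n_1t+m_2n_2/(32t))}t^{k-2}\,\d t,$$
exactly as in the weight-$4$ case. The substitution $t=(n_2/n_1)u$ redistributes the power $t^{k-1}$ as $n_2^{k-1}/n_1^{k-1}$ and turns the two exponentials into $e^{-2\pi m_1n_2u}$ and $e^{-2\pi m_2n_1/(32u)}$; after interchanging summation and integration the integrand factors into a first sum carrying the weight $n^{k-1}$ (a candidate for $\cE_1$, evaluated at $iu$) and a second sum carrying $n^{-(k-1)}$ (a candidate for $\delta^{-(k-1)}\cE_2$, evaluated at the Fricke-dual point $i/(32u)$).

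The point where the even and odd cases genuinely diverge is the recognition of these two sums as Eisenstein series via Lemma \ref{lem1}, whose parity hypothesis $f(-a)g(-b)=(-1)^kf(a)g(b)$ must be met. Since $a=\left(\frac{-4}{\cdot}\right)$ is odd and $b=(\cdot\bmod 2)$ is even, the symmetric pairings $a\cdot a$ and $b\cdot b$ satisfy the hypothesis only for even $k$, whereas the mixed pairing $a\cdot b$ satisfies it only for odd $k$. This \emph{forces} the bookkeeping: for even $k$ I would relabel $m_2\leftrightarrow n_2$ in the second factor so that the sums become $\sum a(m)a(n)n^{k-1}q^{mn}$ and $\sum b(m)b(n)n^{k-1}q^{mn}$, which by \eqref{double sum} are exactly $\cE_1=\tfrac{i}{2}(E^{4,k}_{1,-1}-E^{4,k}_{1,1})$ and $\cE_2=\tfrac14(E^{2,k}_{1,0}-E^{2,k}_{1,1})$ after a short computation with $\zeta_4^{\pm n}=i^{\pm n}$. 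For odd $k$ no relabelling is permitted, the sums remain of mixed type $\sum a(m)b(n)n^{k-1}q^{mn}$, and the same lemma, now phrased through the partial Fourier transform $\widetilde E$, identifies the second factor with $\cE_2=\tfrac{i}4(\widetilde E^{4,k}_{1,2}-\widetilde E^{4,k}_{1,0})$.

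With the two factors named, I would change variables $v=1/(32u)$ and apply the modular involution \eqref{mod involution} for the level-$4$ series $\cE_1$ to rewrite $\cE_1(i/(32v))=\cE_1\bigl(-1/(16\cdot 2iv)\bigr)$ as $\pm(8iv)^k\cE_1(2iv)$; the doubling reconciles the level $32$ of $f$ with the exponent $N^2=16$ of the involution, and it is this step that produces the argument $2iv$ and, in the odd case, converts the mixed-type first factor into the claimed $\cE_1=\tfrac i2(E^{4,k}_{1,1}+E^{4,k}_{1,-1})$ (the Fricke involution interchanges the congruence index with the additive character, turning a $\sin$-type coefficient into a $\cos$-type one). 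At this stage one reaches the single-integral form $L(E,k)=\frac{2^{4-k}\pi^ki^k}{(k-1)!}\int_0^\infty\cE_1(2iv)(\delta^{-(k-1)}\cE_2)(iv)\,\d v$, which for $k=4$ reproduces $\frac{\pi^4}{6}\int_0^\infty\cE_1(2\tau)(\delta^{-3}\cE_2)(\tau)|_{\tau=iv}\,\d v$ from the previous section. Finally, inserting the Cauchy--Gamma identity $\frac1{m^{k-1}}=\frac{(2\pi)^{k-1}}{(k-2)!}\int_0^\infty w^{k-2}e^{-2\pi mw}\,\d w$ (the formula borrowed from \cite{Ro13}) in the form $(\delta^{-(k-1)}\cE_2)(iv)=\frac{(2\pi)^{k-1}}{(k-2)!}\int_v^\infty(z-v)^{k-2}\cE_2(iz)\,\d z$ unfolds the single integral into the double integral with kernel $(z-v)^{k-2}$ and yields the stated prefactor $\frac{8\pi^{2k-1}i^k}{(k-1)!(k-2)!}$.

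The main obstacle is the even/odd distinction combined with the exact identification of the Eisenstein series and the tracking of the scalar factor. Concretely, the hard part is (i) checking that the parity hypothesis of Lemma \ref{lem1} selects the correct pairing, hence the correct level ($2$ for even $k$, $4$ for odd $k$) and the precise combination of the $E_{a,b}^{N,k}$ and $\widetilde E_{d,b}^{N,k}$, and (ii) carrying the powers of $2$, the factors $i^k$ and the signs $(-1)^k$ faithfully through the substitution $v=1/(32u)$ and the involution \eqref{mod involution}, where the Fricke transform changes the shape of $\cE_1$ in the odd case. A secondary technical point is justifying the interchange of summation and integration together with the absolute convergence of the resulting double integral near both cusps, which rests on the cuspidality of $f$ guaranteeing that the constant terms $\gamma_{a,b}$ drop out of the combinations defining $\cE_1$ and $\cE_2$.
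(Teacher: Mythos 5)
Your proposal matches the paper's proof essentially step for step: the same insertion of \eqref{f(i)} into the integral representation, the Rogers--Zudilin substitution $t=(n_2/n_1)u$ with the parity-dictated symmetric versus mixed pairing of the coefficients $a$ and $b$, the identification of the two factors via Lemma~\ref{lem1} and the partial Fourier transform $\widetilde{E}$, the substitution $v=1/(32u)$ followed by the Fricke step $\cE_1(i/(32v))=-(8iv)^k\cE_1(2iv)$ from \eqref{mod involution} (which indeed produces the cosine-type $\cE_1=\tfrac{i}{2}(E^{4,k}_{1,1}+E^{4,k}_{1,-1})$ in the odd case), and the unfolding of $\delta^{-(k-1)}$ via \eqref{elementary integral}, arriving at the paper's intermediate constant $\frac{16(\pi i)^k}{2^k(k-1)!}=\frac{2^{4-k}\pi^k i^k}{(k-1)!}$ and final prefactor. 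The one discrepancy is your opening sign $(-1)^{k-1}$ where the paper carries a uniform $-1$ in both parity cases (the two disagree for odd $k$, so as written your odd-$k$ chain would terminate with an extra minus sign in \eqref{intermediate L}); this is pure bookkeeping, traceable to the sign convention in \eqref{L-function} itself rather than to any flaw in your method.
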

\begin{proof}
We split up the proof in two cases, depending on the parity of $k$. The methods used in both cases are the same, but the coefficients $a(n)$ and $b(n)$ in \eqref{f(i)} are paired differently, so that different Eisenstein series get involved.

\medskip
\noindent
(a) \textsl{Case of $k$ even}.
We have
\begin{align*}
L(E,k)&=-\frac{(2\pi)^k}{(k-1)!}\int_{0}^{\infty}f(it)t^{k-1}\d t
\\ &
=-\frac{(2\pi)^k}{2(k-1)!}\sum_{m_1,m_2,n_1,n_2\geq 1}a(m_1)b(n_1)b(m_2)a(n_2)
\\ & \quad
\times\int_{0}^{\infty}\exp\bigg(-2\pi\bigg(m_1 n_1 t+\frac{m_2 n_2}{32t}\bigg)\bigg)t^{k-2}\d t,
\end{align*}
and the change of variable $t=\frac{n_2}{n_1}u$ applied to each individual sum gives
\begin{align*}
L(E,k)&=-\frac{(2\pi)^k}{2(k-1)!}\int_{0}^{\infty} \sum_{m_1,n_2\geq 1}n_2^{k-1}a(m_1)a(n_2)\exp(-2\pi m_1 n_2 u)
\\ & \quad
\times \sum_{m_2,n_1\geq 1}\frac{b(m_2)b(n_1)}{n_1^{k-1}}\exp\bigg(\frac{-2\pi m_2 n_1}{32u}\bigg)u^{k-2}\d u.
\end{align*}
Now take $v=\frac{1}{32u}$, hence $u^{k-2}\d u=-\frac{1}{32^{k-1}}\frac{\d v}{v^k}$ and
\begin{align*}
L(E,k)&=-\frac{(2\pi)^k}{2\cdot 32^{k-1}(k-1)!} \sum_{m_1,n_2\geq 1}n_2^{k-1}a(m_1)a(n_2)\exp\bigg(\frac{-2\pi m_1 n_2}{32v}\bigg)
\\ & \quad
\times \sum_{m_2,n_1\geq 1}\frac{b(m_2)b(n_1)}{n_1^{k-1}}\exp(-2\pi m_2 n_1 v)\frac{\d v}{v^k}.
\end{align*}
Define
$$\cE_1(\tau):=\sum_{m,n\geq 1}a(m)a(n) n^{k-1}q^{mn}
=\sum_{m,n\geq 1}\bigg(\frac{-4}{mn}\bigg) n^{k-1}q^{mn},$$
$$\cE_2(\tau):=\sum_{m,n\geq 1}b(m)b(n) n^{k-1}q^{mn}
=\sum_{\substack{m,n\geq 1 \\ m,n\text{ odd}}}n^{k-1}q^{mn},$$
so that
$$L(E,k)=-\frac{(2\pi)^k}{2\cdot 32^{k-1}(k-1)!}\int_{0}^{\infty}\cE_1\bigg(\frac{i}{32v}\bigg)\delta^{-k+1}(\cE_2)(iv)\frac{\d v}{v^k}.$$
It remains to write $\cE_1$ and $\cE_2$ in terms of Eisenstein series and apply a modular transformation to $\cE_1$. As $\big(\frac{-4}{m}\big)=\frac{i^m-i^{-m}}{2i}$, we have
$$
\cE_1=
\tfrac{1}{4}\widetilde{E}_{1,-1}^{4,k}-\tfrac{1}{4}\widetilde{E}_{1,1}^{4,k}
=\tfrac{i}{2}(E_{1,-1}^{4,k}-E_{1,1}^{4,k}),$$
and
$$
\cE_2=\tfrac{1}{8}(\widetilde{E}_{1,1}^{2,k}-\widetilde{E}_{1,0}^{2,k}-\widetilde{E}_{0,1}^{2,k}+\widetilde{E}_{0,0}^{2,k})
=\tfrac{1}{4}(-E_{1,1}^{2,k}+E_{1,0}^{2,k}).
$$
By \eqref{mod involution} we have
\begin{align*}
\cE_1\bigg(\frac{i}{32v}\bigg)&=\frac{i}{2}\left(E_{1,-1}^{4,k}\bigg(\frac{-1}{4^2(2iv)}\bigg)-E_{1,1}^{4,k}\bigg(\frac{-1}{4^2(2iv)}\bigg)\right)
\\ &
=\frac{i}{2}(4\cdot 2iv)^k(-E_{1,-1}^{4,k}(2iv)+E_{1,1}^{4,k}(2iv))
\\ &
=-(8iv)^k \cE_1(2iv).
\end{align*}
Thus we obtain
\begin{align*}
L(E,k)&=\frac{-(2\pi)^k}{2\cdot 32^{k-1}(k-1)!}\int_{0}^{\infty}-(8iv)^k \cE_1(2iv)(\delta^{-k+1}\cE_2)(iv)\frac{\d v}{v^k}
\\ &
=\frac{16(\pi i)^k}{2^k(k-1)!}\int_{0}^{\infty}\cE_1(2iv)(\delta^{-k+1}\cE_2)(iv)\d v.
\end{align*}
Using the elementary integral identity
\begin{equation}
\frac{\exp(-2\pi a u)}{a^s}=\frac{(2\pi)^s}{\Gamma(s)}\int_u^\infty (z-u)^{s-1}\exp(-2\pi a z)\d z,
\label{elementary integral}
\end{equation}
we can write
$$\delta^{-k+1}\cE_2(iv)=\frac{(2\pi)^{k-1}}{(k-2)!}\int_v^\infty (z-v)^{k-2}\cE_2(iz)\d z.$$
Applying this to the integral for $L(E,k)$ just found we obtain equation~\eqref{intermediate L}.

\medskip
\noindent
(b) \textsl{Case of $k$ odd}.
We have
\begin{align*}
L(E,k)&=-\frac{(2\pi)^k}{(k-1)!}\int_{0}^{\infty}f(it)t^{k-1}\d t
\\ &
=-\frac{(2\pi)^k}{2(k-1)!}\sum_{m_1,m_2,n_1,n_2\geq 1}a(n_1)b(m_1)b(m_2)a(n_2)
\\ & \quad
\times\int_{0}^{\infty}\exp\bigg(-2\pi\bigg(m_1 n_1 t+\frac{m_2 n_2}{32t}\bigg)\bigg)t^{k-2}\d t,
\end{align*}
and the change $t=\frac{n_2}{n_1}u$ gives
\begin{align*}
L(E,k)&=-\frac{(2\pi)^k}{2(k-1)!}\int_{0}^{\infty} \sum_{m_1,n_2\geq 1}n_2^{k-1}b(m_1)a(n_2)\exp(-2\pi m_1 n_2 u)
\\ & \quad
\times \sum_{m_2,n_1\geq 1}\frac{b(m_2)a(n_1)}{n_1^{k-1}}\exp\bigg(\frac{-2\pi m_2 n_1}{32u}\bigg)u^{k-2}\d u.
\end{align*}
As before, take $v=\frac{1}{32u}$ to get $u^{k-2}\d u=-\frac{1}{32^{k-1}}\frac{\d v}{v^k}$ and
\begin{align*}
L(E,k)&=-\frac{(2\pi)^k}{2\cdot 32^{k-1}(k-1)!} \sum_{m_1,n_2\geq 1}n_2^{k-1}b(m_1)a(n_2)\exp\bigg(\frac{-2\pi m_1 n_2}{32v}\bigg)
\\ & \quad
\times \sum_{m_2,n_1\geq 1}\frac{b(m_2)a(n_1)}{n_1^{k-1}}\exp(-2\pi m_2 n_1 v)\frac{\d v}{v^k}.
\end{align*}
This time, define
$$\widehat{\cE}_1(\tau):=\sum_{m,n\geq 1}b(m)a(n) n^{k-1}q^{mn}
=\sum_{\substack{m,n\geq 1 \\ m\text{ odd}}}\bigg(\frac{-4}{n}\bigg) n^{k-1}q^{mn},$$
$$\cE_2(\tau):=\sum_{m,n\geq 1}b(m)a(n) m^{k-1}q^{mn}
=\sum_{\substack{m,n\geq 1 \\ m\text{ odd}}}\bigg(\frac{-4}{n}\bigg) m^{k-1}q^{mn},$$
so that
$$L(E,k)=-\frac{(2\pi)^k}{2\cdot 32^{k-1}(k-1)!}\int_{0}^{\infty}\cE_1\bigg(\frac{i}{32v}\bigg)\delta^{-k+1}(\cE_2)(iv)\frac{\d v}{v^k}.$$
Since $$b(m)a(n)=\frac{i^n-i^{-n}-i^{n+2m}+i^{-n-2m}}{4i},$$
we obtain
$$
\widehat{\cE}_1=
\frac{\widetilde{E}_{0,1}^{4,k}-\widetilde{E}_{2,1}^{4,k}}{4i}
=\frac{E_{1,1}+E_{-1,1}}{2i}
$$
and
$$
\cE_2=
\frac{\widetilde{E}_{1,0}^{4,k}-\widetilde{E}_{1,2}^{4,k}}{4i}
=\frac{1}{4i}\sum_{a\bmod 4}i^{-a}(E_{a,2}^{4,k}-E_{a,0}^{4,k}).
$$
By \eqref{mod involution} we have
\begin{align*}
\widehat{\cE}_1\bigg(\frac{i}{32v}\bigg)&=\frac{1}{2i}\left(E_{1,1}^{4,k}\bigg(\frac{-1}{4^2(2iv)}\bigg)+E_{-1,1}^{4,k}\bigg(\frac{-1}{4^2(2iv)}\bigg)\right)
\\ &
=-\frac{i}{2}(4\cdot 2iv)^k(E_{1,-1}^{4,k}(2iv)+E_{1,1}^{4,k}(2iv))
\\ &
=-(8iv)^k \cE_1(2iv),
\end{align*}
where $$\cE_1=\frac{i}{2}(E_{1,1}^{4,k}+E_{1,-1}^{4,k}).$$
Finally,
\begin{align*}
L(E,k)&=\frac{-(2\pi)^k}{2\cdot 32^{k-1}(k-1)!}\int_{0}^{\infty}-(8iv)^k \cE_1(2iv)(\delta^{-k+1}\cE_2)(iv)\frac{\d v}{v^k}
\\ &
=\frac{16(\pi i)^k}{2^k(k-1)!}\int_{0}^{\infty}\cE_1(2iv)(\delta^{-k+1}\cE_2)(iv)\d v.
\end{align*}
Applying the integral identity \eqref{elementary integral} as in the case of $k$ even, we obtain equation~\eqref{intermediate L} again.
\end{proof}

\section{From Eisenstein to hypergeometric series}
In this section, we will express $\cE_1$ and $\cE_2$ in terms of hypergeometric functions and give period representations. In order to achieve this, we first need to study two different Eisenstein series.
For even $k$, we can express $\cE_1$ and $\cE_2$ in terms of
$$
\Eis_k(\tau)= q^{1/2}\sum_{\substack{n\geq 1\\n\;\text{odd}}} \frac{n^{k-1}\bigl((-1)^{k-1}q\bigr)^{(n-1)/2}}{1+q^n};
$$
for odd $k$, we can express $\cE_2$ in terms of the above series, while $\cE_1$ in terms of
$$
\Eistilde_k(\tau)=\sum_{n\geq 1} \frac{n^{k-1}(-q)^{n}}{1+q^{2n}}.
$$

For the Eisenstein series $\cE_1$ and $\cE_2$ we have, for even $k$,
\begin{align*}
\cE_1(\tau)
&=\sum_{m,n\ge1}\biggl(\frac{-4}{mn}\biggr)n^{k-1}q^{mn},
\\
\cE_2(\tau)
&=\sum_{\substack{m,n\ge1\\m,n\;\text{odd}}}n^{k-1}q^{mn}
=\frac1i\sum_{m,n\ge1}\biggl(\frac{-4}{mn}\biggr)n^{k-1}(iq)^{mn}
=\frac{\cE_1(\tau+\frac14)}i,
\end{align*}
hence
$$\cE_1(\tau)=\Eis_k(2\tau)\quad\text{and}\quad\cE_2(\tau)=-i\Eis_k\left(2\tau+\frac{1}{2}\right)\quad\text{for $k$ even}.$$
For odd $k$, the situation is similar. We have
$$\cE_1(\tau)=i \sum_{\substack{n\geq 1 \\ n\text{ even}}} \frac{n^{k-1} (iq)^n}{1+(iq)^{2n}}=2^{k-1}i\sum_{n\geq 1} \frac{n^{k-1}(-q^2)^{n}}{1+(q^2)^{2n}}$$ and
$$\cE_2(\tau)=\sum_{\substack{m,n\geq 1 \\ m, n\,\text{odd}}} \left(\frac{-4}{n}\right) m^{k-1} q^{mn}=-i \sum_{\substack{m,n\geq 1 \\ m, n\,\text{odd}}} i^m n^{k-1}q^{mn}$$
implying
$$\cE_1(\tau)=-2^{k-1} \Eistilde_k(2\tau)\quad\text{and}\quad\cE_2(\tau)=\Eis_k(2\tau)\quad\text{for $k$ odd}.$$

For every weight, both series $\Eis_k$ and $\Eistilde_k$ can be expressed in terms of the modular functions $z, \tilde z$ and the weight $1$ modular form $F$, where
\begin{align*}
z&=\frac{4\eta_1^4\eta_4^8}{\eta_2^{12}}=4q^{1/2}(1-4q+14q^2-40q^3+\dotsb),
\\
\tilde z&=\sqrt{1-z^2}=\frac{\eta_1^8\eta_4^4}{\eta_2^{12}}=1-8q+32q^2-96q^3+\dotsb,
\\
F&={}_2F_1(\tfrac12,\tfrac12;1;z^2)=\frac{\eta_2^{10}}{\eta_1^4\eta_4^4}=1+4q+4q^2+\dotsb
=\biggl(1+2\sum_{n=1}^\infty q^{n^2}\biggr)^2=\theta(\tau)^2.
\end{align*}
(While we will not use this fact, note that $z^2$ and $\tilde{z}^2$ are modular functions for $\Gamma_1(4)$ by a theorem of Newman \cite{Ne57,Ne59}, while $F$ is a modular function of weight $1$ for the same group as it is the square of the theta function.)

The next lemma shows that, if we write 
\begin{alignat*}{4}
\Eis_k(\tau)&=\tfrac14F^kz\tilde z\cdot\mu_k(z^2) \quad&& \text{for $k$ even}, \\
\Eis_k(\tau)&=\tfrac14F^kz\cdot\mu_k(z^2) \quad&& \text{for $k$ odd}, \\
\Eistilde_k(\tau)&=\tfrac{1}{4} F^k \tilde{z}\cdot \nu_k(z^2) \quad&& \text{for $k>1$ odd},
\end{alignat*}
and $\nu_1(x)=1$,
then $\mu_k(x)\in\mathbb Z[x]$ is a polynomial for $k\ge2$
and $\nu_k(x)\in \mathbb Z[x/4]$ is a polynomial for odd positive $k$.

\begin{lemma}
\label{generating functions}
The exponential generating functions
\begin{gather*}
\operatorname{SD}(t)=\sum_{\substack{k\geq 1 \\ k\;\text{even}}}\frac{\mu_{k}(x)}{(k-1)!}\,t^{k-1},
\quad
 \operatorname{NC}(t)=\sum_{\substack{k\geq 1 \\ k\;\text{odd}}}\frac{\mu_{k}(x)}{(k-1)!}\,t^{k-1}
\\ \text{and}\quad
 \operatorname{CD}(t)=\sum_{\substack{k\geq 1 \\ k\;\text{odd}}} \frac{\nu_{k}(x)}{(k-1)!}\, t^{k-1}
\end{gather*}
can be expressed in terms of Jacobi elliptic functions:
\begin{alignat*}{4}
\operatorname{SD}(t)&=-i&&\sd(it,\sqrt{x}) &&= &&\sd(t,\sqrt{1-x}),
\\ \operatorname{NC}(t)&=&&\cn(it,\sqrt{x}) &&= &&\nc(t,\sqrt{1-x}),
\\  \operatorname{CD}(t)&=&&\nd\big(\tfrac{it}{2},\sqrt{x}\big) &&=&&\cd\big(\tfrac{t}{2},\sqrt{1-x}\bigr),
\end{alignat*}
where $\nc=\frac{1}{\cn}$, $\sd=\frac{\sn}{\dn}$, $\nd=\frac{1}{\dn}$ and $\cd=\frac{\cn}{\dn}$. Here $\sn(u,\kp),\cn(u,\kp), \dn(u,\kp)$ are the standard Jacobi elliptic functions of modulus~$\kp$.
\end{lemma}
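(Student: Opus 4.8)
The plan is to identify each of the three exponential generating functions with the Fourier (nome) expansion of a Jacobi elliptic function.

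The first step is to interchange the order of summation. Inserting the defining $q$-series of $\Eis_k$ and $\Eistilde_k$ and using the elementary collapses
$$\sum_{\substack{k\ge1\\ k\text{ even}}}\frac{(nt)^{k-1}}{(k-1)!}=\sinh(nt),\qquad\sum_{\substack{k\ge1\\ k\text{ odd}}}\frac{(nt)^{k-1}}{(k-1)!}=\cosh(nt),$$
I get the closed forms
\begin{align*}
\sum_{\substack{k\ge1\\ k\text{ even}}}\frac{\Eis_k}{(k-1)!}t^{k-1}&=\sum_{m\ge0}\frac{(-1)^m q^{m+1/2}}{1+q^{2m+1}}\sinh\bigl((2m+1)t\bigr),\\
\sum_{\substack{k\ge1\\ k\text{ odd}}}\frac{\Eis_k}{(k-1)!}t^{k-1}&=\sum_{m\ge0}\frac{q^{m+1/2}}{1+q^{2m+1}}\cosh\bigl((2m+1)t\bigr),\\
\sum_{\substack{k\ge1\\ k\text{ odd}}}\frac{\Eistilde_k}{(k-1)!}t^{k-1}&=\sum_{n\ge1}\frac{(-q)^{n}}{1+q^{2n}}\cosh(nt).
\end{align*}
At the same time, substituting the polynomial relations $\Eis_k=\tfrac14 F^k z\tilde z\,\mu_k(z^2)$ (and its two analogues) and using $F^k t^{k-1}=F\cdot(Ft)^{k-1}$ rewrites the very same left-hand sides as $\tfrac14 z\tilde z F\cdot\operatorname{SD}(Ft)$, $\tfrac14 zF\cdot\operatorname{NC}(Ft)$ and $\tfrac14\tilde z F\cdot\operatorname{CD}(Ft)$, where the last holds only up to a $t$-independent constant, because the relation for $\Eistilde_k$ is valid only for $k>1$.

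Next I would fix the classical data attached to $\tau$. With $q=e^{2\pi i\tau}$ taken as nome one has $F=\theta(\tau)^2=\vartheta_3(0\mid 2\tau)^2$, and the $\eta$-quotients defining $z$ and $\tilde z$ agree as $q$-series (hence, by the Sturm bound, identically) with the theta quotients $\vartheta_2(0\mid2\tau)^2/\vartheta_3(0\mid2\tau)^2$ and $\vartheta_4(0\mid2\tau)^2/\vartheta_3(0\mid2\tau)^2$. Thus the Jacobi modulus is $\kp=z$, the complementary modulus is $\tilde z$, and $K=\tfrac\pi2\,{}_2F_1(\tfrac12,\tfrac12;1;z^2)=\tfrac\pi2 F$, with $q$ the nome $e^{-\pi K'/K}$ of this modulus. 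I then invoke the classical Fourier expansions
$$\sd(w,z)=\frac{2\pi}{Kz\tilde z}\sum_{m\ge0}\frac{(-1)^m q^{m+1/2}}{1+q^{2m+1}}\sin\frac{(2m+1)\pi w}{2K},\qquad \cn(w,z)=\frac{2\pi}{Kz}\sum_{m\ge0}\frac{q^{m+1/2}}{1+q^{2m+1}}\cos\frac{(2m+1)\pi w}{2K},$$
together with the cosine series of $\nd(w,z)$, whose $n$-th coefficient is $\tfrac{2\pi}{K\tilde z}\tfrac{(-1)^n q^n}{1+q^{2n}}$ and whose constant term is fixed by $\nd(0,z)=1$. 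Substituting $w=iFt$ (respectively $w=\tfrac12 iFt$ for $\nd$) turns $\tfrac{\pi w}{2K}$ into $it$ (respectively $\tfrac{it}2$), so $\sin\mapsto i\sinh$ and $\cos\mapsto\cosh$; since $\tfrac{2\pi}K=\tfrac4F$, the prefactors $\tfrac{2\pi}{Kz\tilde z}=\tfrac4{Fz\tilde z}$ and so on cancel the factors $\tfrac14 z\tilde z F$, etc., from the first step. Term-by-term comparison of the resulting $q$-series then yields $\operatorname{SD}(Ft)=-i\,\sd(iFt,z)$, $\operatorname{NC}(Ft)=\cn(iFt,z)$ and $\operatorname{CD}(Ft)=\nd(\tfrac12 iFt,z)$; replacing $Ft$ by $t$ and $z$ by $\sqrt x$ gives the first equalities of the lemma. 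The second equalities are then immediate from Jacobi's imaginary transformation $\sd(iu,\kp)=i\,\sd(u,\kp')$, $\cn(iu,\kp)=\nc(u,\kp')$ and $\nd(iu,\kp)=\cd(u,\kp')$ with $\kp'=\sqrt{1-\kp^2}$.

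I expect the main difficulty to lie in two places. The routine-looking but essential input is the theta dictionary of the previous paragraph: one must verify the eta-to-theta identities and, in particular, that the paper's $q$ is exactly the nome of the modulus $z$, so that $K=\tfrac\pi2 F$ on the nose; without this the argument rescaling by $F$ would not produce the integer frequencies $nt$. The genuinely delicate point, however, is the $\operatorname{CD}$ case at $k=1$: since $\Eistilde_1\neq\tfrac14 F\tilde z$, the transition from the generating function to $\tfrac14\tilde z F\operatorname{CD}(Ft)$ acquires an additive constant, and this constant must be matched against the nonzero mean value (zeroth Fourier coefficient) of $\nd(\cdot,z)$ — whereas for $\operatorname{SD}$ and $\operatorname{NC}$ the functions $\sd$ and $\cn$ have zero mean and the naive term-by-term matching suffices. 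Reconciling this offset, which is precisely the reason $\nu_1$ must be defined separately as $\nu_1=1$, is the step that requires the most care.
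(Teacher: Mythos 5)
Your proposal is correct and takes essentially the same route as the paper's proof: interchange the $k$- and $n$-summations to collapse each generating function into a Lambert-type series with $\cosh$/$\sinh$ kernels, identify these with the classical Fourier (nome) expansions of $\cn$, $\sd$, $\nd$ from Whittaker--Watson under the dictionary $\kp=z$, $\kp'=\tilde z$, $K=\tfrac{\pi}{2}F$ (with the paper's $q$ as the nome), and obtain the second equalities from Jacobi's imaginary transformation. The paper spells out only the $\operatorname{NC}$ case and leaves the others to the reader, so your explicit reconciliation of the constant term in the $\operatorname{CD}$ case at $k=1$ --- absorbed via $\nd(0,\sqrt{x})=1=\nu_1$, equivalently the Lambert series for $\vartheta_4^2$ --- is a genuine detail the paper glosses over, and you handle it correctly.
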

\begin{remark}
	This lemma illustrates a more general phenomenon relating elliptic functions to generating functions of Eisenstein series. As another example, the Weierstrass function $\wp$ is connected to the ordinary Eisenstein series $E_k$. In \cite[\S\,2]{CoLa11} more of such examples can be found.
\end{remark}

Combining Lemma \ref{generating functions} with the differential equations \eqref{eq-A} and Maclaurin expansions \eqref{eq-B} we arrive at the following corollary.
\begin{corollary}
%For $k>1$, the function $\mu_k(x)$ is an integer polynomial of degree $\lfloor\frac{k-2}{2}\rfloor$.
%Furthermore, if $k$ is odd and $k> 1$, then $\nu_k(4x)$ is an integer polynomial of degree $\frac{k-1}{2}$ vanishing at $x=0$.
For $k>1$ the function $\mu_k(x)$ is an integer polynomial of degree at most $k$, and if $k$ is also odd then $\nu_k(x)$ is a rational polynomial of degree at most $k$ vanishing at $x=0$.
\end{corollary}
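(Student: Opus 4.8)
The plan is to turn each of the three elliptic-function identities of Lemma~\ref{generating functions} into an autonomous second-order ODE in $t$ (with $x$ as a parameter), read off from it a recurrence for the coefficients $\mu_k$ and $\nu_k$, supply initial data from the Maclaurin expansions \eqref{eq-B}, and then close everything by a single induction on $k$. First I would record the first integrals $\sn^2+\cn^2=1$ and $\dn^2+\kp^2\sn^2=1$, which follow from \eqref{eq-A} and \eqref{eq-B}, and use them to compute the square of the derivative of each generating function. Setting $\kp^2=1-x$ this produces quartic relations such as $(\operatorname{SD}')^2=1+(1-2x)\operatorname{SD}^2-x(1-x)\operatorname{SD}^4$, and one further differentiation yields
\begin{align*}
\operatorname{SD}''&=(1-2x)\operatorname{SD}-2x(1-x)\operatorname{SD}^3,\\
\operatorname{NC}''&=(1-2x)\operatorname{NC}+2x\,\operatorname{NC}^3,\\
\operatorname{CD}''&=-\tfrac{2-x}{4}\operatorname{CD}+\tfrac{1-x}{2}\operatorname{CD}^3.
\end{align*}

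Comparing the coefficient of $t^{k-1}$ on both sides, and using that the multinomial coefficients $\binom{k-1}{i,j,l}$ are integers, turns each equation into a recurrence that expresses $\mu_{k+2}$ (resp.\ $\nu_{k+2}$) as a degree-$\le1$ polynomial in $x$ times $\mu_k$ (resp.\ $\nu_k$), plus a polynomial in $x$ of degree at most $2$ times $\sum_{i+j+l=k-1}\binom{k-1}{i,j,l}\mu_{i+1}\mu_{j+1}\mu_{l+1}$ (and analogously for $\nu$). The base data $\mu_2=1$, $\mu_4=1-2x$, $\mu_1=\mu_3=1$, $\nu_1=1$, $\nu_3=-x/4$ read off directly from \eqref{eq-B}. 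Integrality of the $\mu_k$ and rationality of the $\nu_k$ are then inherited from the integer (resp.\ rational) coefficients of the recurrence together with the initial values, consistent with what was already recorded before Lemma~\ref{generating functions}.

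For the degree estimate the point to watch is that proving ``$\deg_x\le k$'' directly does \emph{not} induct: the cubic term would then permit degree $k+2$ at the next step. Instead I would prove the sharper statement $\deg_x\mu_k\le k-1$ and $\deg_x\nu_k\le k-1$ by strong induction. In the recurrence the three indices in the cubic sum satisfy $(i+1)+(j+1)+(l+1)=k+2$, so under the hypothesis the product $\mu_{i+1}\mu_{j+1}\mu_{l+1}$ has degree at most $i+j+l=k-1$; multiplying by the degree-$\le2$ coefficient gives degree at most $k+1=(k+2)-1$, while the linear term contributes only degree at most $k$. Thus $\deg_x\mu_{k+2}\le(k+2)-1$, and likewise for $\nu$. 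This self-improving hypothesis (``$k-1$'' rather than ``$k$'') is exactly what makes the cubic term close, and it is the one genuinely delicate point of the argument; since $k-1\le k$, the asserted bound ``degree at most $k$'' follows a fortiori.

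Finally, the vanishing $\nu_k(0)=0$ for odd $k>1$ is immediate after setting $x=0$, i.e.\ modulus $\kp=1$, where the Jacobi functions degenerate to hyperbolic ones: $\cn(u,1)=\dn(u,1)=\operatorname{sech}u$, so $\cd(u,1)\equiv1$ and hence $\operatorname{CD}(t)\big|_{x=0}=\cd\big(\tfrac{t}{2},1\big)=1$ is constant in $t$. Matching coefficients forces $\nu_k(0)=0$ for every odd $k>1$ (and $\nu_1(0)=1$). As a consistency check, the same substitution gives $\sn(u,1)=\tanh u$, whence $\operatorname{SD}(t)\big|_{x=0}=\sinh t$ and $\operatorname{NC}(t)\big|_{x=0}=\cosh t$, so $\mu_k(0)=1$ for all $k$, in agreement with the computed $\mu_k$. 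The main obstacle is therefore entirely the degree bookkeeping in the cubic term, which the strengthened induction hypothesis resolves.
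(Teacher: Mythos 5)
Your proof is correct, but it takes a genuinely different route from the paper's. The paper's argument is linear in nature: from \eqref{eq-A}, the derivative of any monomial $\sn^{n'}\cn^{m'}\dn^{l'}$ (negative exponents allowed, as needed for $\sd$, $\nc$, $\nd$, $\cd$) is again a $\mathbb{Z}[\kp^2]$-linear combination of such monomials, and each differentiation raises the $\kp$-degree by at most $2$, only through $\dn'=-\kp^2\sn\cn$; hence the coefficient of $u^n$ lies in $\mathbb{Z}[\kp^2]$ with $\kp^2$-degree at most $n$, and substituting $\kp^2=1-x$ yields integrality and the degree bound in one stroke, with the argument $t/2$ in $\cd(t/2,\sqrt{1-x})$ accounting for the (dyadic) denominators of $\nu_k$. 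Note also that the paper reads off $\nu_k(0)=0$ from the factor $\kp^2$ in the equation for $\dn$ using the other form $\operatorname{CD}(t)=\nd(it/2,\sqrt{x})$, i.e.\ the degeneration at modulus $\sqrt{x}\to 0$ where $\dn\equiv1$ --- the opposite limit from your $\cd(u,1)\equiv 1$ at modulus $1$; both are valid. You instead derive nonlinear second-order ODEs from the first integrals $\sn^2+\cn^2=1$ and $\dn^2+\kp^2\sn^2=1$, and all three of your ODEs check out (e.g.\ $(\operatorname{SD}')^2=(1-x\operatorname{SD}^2)(1+(1-x)\operatorname{SD}^2)$ expands to your quartic relation, and your recurrences reproduce the paper's tables: $\mu_6=(1-2x)\mu_4-12x(1-x)=1-16x+16x^2$ and $\nu_3=-\tfrac{2-x}{4}+\tfrac{1-x}{2}=-\tfrac{x}{4}$). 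Your key observation --- that the naive bound $\deg_x\le k$ does not propagate through the cubic term, whereas the self-improving hypothesis $\deg_x\mu_k\le k-1$, $\deg_x\nu_k\le k-1$ closes the induction --- is exactly right, and it is the nonlinear counterpart of what the monomial-closure argument gets for free (one power of $\kp^2$ per differentiation likewise gives degree $\le k-1$, slightly sharper than the stated bound $k$). One small notational caveat: in your cubic convolution $\sum_{i+j+l=k-1}\binom{k-1}{i,j,l}\mu_{i+1}\mu_{j+1}\mu_{l+1}$ the indices must respect the parity of the chain at hand (only even-index $\mu$ occur in the $\operatorname{SD}$ cube, only odd-index in the $\operatorname{NC}$ cube, and similarly for $\nu$ in $\operatorname{CD}$), since the two families of $\mu_k$ satisfy different ODEs; your phrase ``each equation'' shows this is intended, so the induction is unaffected. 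In sum, your route buys explicit, implementable recurrences for $\mu_k$ and $\nu_k$ (and in fact shows the $\nu_k$ have only powers of $2$ in their denominators, consistent with the paper's $\nu_k(x)\in\mathbb{Z}[\tfrac12,x]$ and the remark that $\nu_k(4x)\in\mathbb{Z}[x]$), while the paper's buys brevity and the additional structural fact that the leading Maclaurin coefficient of each quotient equals $1$.
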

\begin{proof}[Proof of Corollary]
	From the differential equations \eqref{eq-A} we see that $\kappa=\sqrt{1-x}$ only appears raised to even powers in the Maclaurin expansion of any quotient $f(u,\kappa)=\sn(u,\kappa)^n\cn(u,\kappa)^m\dn(u,\kappa)^k$ of Jacobi elliptic functions, and the highest power of $\kappa$ appearing for the coefficient of $u^n$ is at most $\kappa^{2n}$. Thus we see that $\mu_k$ and $\nu_k$ are polynomials of degree at most $k$. Furthermore, we also see that the leading nonzero coefficient of the Maclaurin expansion of $f(u,\kappa)$ is equal to $1$, and since the derivatives of $f$ are $\mathbb{Z}[\kappa^2]$-linear combinations of quotients of the form $\sn(u,\kappa)^{n'}\cn(u,\kappa)^{m'}\dn(u,\kappa)^{k'}$ we see $\mu_k(x)\in \mathbb{Z}[x]$ and $nu_k(x)\in \mathbb{Z}[\frac{1}{2},x]$.
	$\nu_k(x)$ vanishes at $0$ directly follows from the appearance of $\kappa^2$ in the differential equation for $\dn(u,\kappa)$.
\end{proof}

\begin{remark}
Though arithmetic properties of polynomials $\nu_k(x)$ are not relevant to our derivation, we cannot refrain from reproducing some of them here together with an indication of how to prove those.
The series featured in Lemma~\ref{generating functions} are all expressible via the classical theta functions $\vt_j(z)=\vt_j(z,q)$, where $j=2,3,4$, defined in~\cite[Chap.~21]{WhWa27}.
With a help of the heat equation, their $z$-expansions can be given as follows:
$$
\vt_j(z)=\sum_{m=0}^\infty\frac{(-1)^m(2t)^{2m}}{(2m)!}\,\delta^m\vt_j
\quad\text{for}\; j=2,3,4,
$$
where $\vt_j=\vt_j(0,q)$ are the corresponding thetanulls and $\delta=q\frac{\d}{\d q}$.
In the case of the generating function $\operatorname{CD}(t)$ we obtain
\begin{align}
	\sum_{\substack{k\geq 1 \\ k\;\text{odd}}} \frac{\nu_{k}(x)}{(k-1)!}\, t^{k-1}
	&=\nd\Big(\frac{it}{2},\sqrt{x}\Big)
	= \frac{\vt_4(it/(2\vt_3^2)) / \vt_4} {\vt_3(it/(2\vt_3^2)) / \vt_3}
	\nonumber\\
	&=\bigg(\sum_{m=0}^\infty \frac{t^{2m}/\vt_3^{4m}}{(2m)!} \frac{\delta^m\vt_4}{\vt_4} \bigg)\bigg/
	\bigg(\sum_{m=0}^\infty \frac{t^{2m}/\vt_3^{4m}}{(2m)!} \frac{\delta^m\vt_3}{\vt_3} \bigg)
	\label{eq:nd}
\end{align}
with $x=\vt_2^4/\vt_3^4=1-\vt_4^4/\vt_3^4$.
Notice that the logarithmic $\delta$-derivatives of the thetanulls,
$$
\psi_j=\frac{\delta\vt_j}{\vt_j} \quad\text{for}\; j=2,3,4,
$$
satisfy Halphen's system of differential equations \cite{Ha81,Zu00}
\begin{gather*}
	\delta\psi_2=2(\psi_2\psi_3+\psi_2\psi_4-\psi_3\psi_4), \quad
	\delta\psi_3=2(\psi_2\psi_3+\psi_3\psi_4-\psi_2\psi_4), \\
	\delta\psi_4=2(\psi_2\psi_4+\psi_3\psi_4-\psi_2\psi_3).
\end{gather*}
This means that the coefficients $\rho_{m,j}=(\delta^m\vt_j)/\vt_j$ in \eqref{eq:nd} satisfy the recursion $\rho_{m,j}=\delta\rho_{m-1,j}+\psi_j\rho_{m-1,j}$ for $m=1,2,\dots$;
in particular, all such $\rho_{m,j}$ are homogeneous polynomials in $\mathbb Z[\psi_2,\psi_3,\psi_4]$ of degree~$m$.
Expanding \eqref{eq:nd} we obtain
$$
\nd\Big(\frac{it}{2},\sqrt{x}\Big)
=1+\sum_{m=1}^\infty \frac{t^{2m}/\vt_3^{4m}}{(2m)!}\cdot\rho_m
$$
for some homogeneous polynomials $\rho_m\in\mathbb Z[\psi_2,\psi_3,\psi_4]$ of degree~$m$.
On the other hand, we know that the coefficients in this $t$-expansion are all \emph{modular} (rather than quasi-modular) forms of weight $2m$, hence they are actually polynomials in
\begin{equation}
	\psi_2-\psi_4=\frac14\vt_3^4 \quad\text{and}\quad \psi_3-\psi_4=\frac14\vt_2^4,
	\label{eq:psi-diff}
\end{equation}
so that $\rho_m\in\mathbb Z[\psi_2-\psi_4,\psi_3-\psi_4]$.
Using \eqref{eq:psi-diff} we conclude that $\nu_{2m+1}=\rho_m/\vt_3^{4m}$
is a polynomial of degree $m$ in $x=\vt_2^4/\vt_3^4$ satisfying $\nu_{2m+1}(4x)\in\mathbb Z[x]$.
A similar strategy allows one to prove the related properties of the other polynomials $\mu_k(x)$.
\end{remark}

Here are examples of the polynomials for $k\leq 10$:
$$
\mu_1(x)=\mu_3(x)=1, \; \mu_5(x)=1+4x, \; \mu_7(x)=1+44x+16x^2, \; \mu_9(x)=1+408x+912x^2+64x^3,
$$
and
\begin{gather*}
\mu_2(x)=1, \; \mu_4(x)=1-2x, \; \mu_6(x)=1-16x+16x^2, \; \mu_8(x)=1-138x+408x^2-272x^3, \\
\mu_{10}(x)=1-1232x+9168x^2-15872x^3+7936x^4,
\end{gather*}
as well as
\begin{gather*}
\nu_3(x)=-\frac{x}{4}, \; \nu_5(x)=20 \left(\frac{x}{4}\right)^2-\frac{x}{4}, \; \nu_7(x)=-61\left(\frac{x}{4}\right)^3+19\left(\frac{x}{4}\right)^2-\frac{x}{4}, \\
\nu_9(x)=1385\left(\frac{x}{4}\right)^4-606\left(\frac{x}{4}\right)^3+69\left(\frac{x}{4}\right)^2-\frac{x}{4}.
\end{gather*}

\begin{proof}[Proof of Lemma~\textup{\ref{generating functions}}]
	We only spell out the details for $\operatorname{NC}(t)$, as the proofs of other identities follow the same steps.
	Writing out the definitions we have
	\begin{align*}
	\sum_{k=0}^\infty \frac{\mu_{2k+1}(z^2)}{(2k)!}t^{2k} &=\frac{4}{zF}\sum_{k=0}^\infty \frac{\Eis_{2k+1}(\tau)t^{2k}}{(2k)! F^{2k}}
	\\&=\frac{4}{zF}\sum_{k=0}^\infty \frac{t^{2k}}{(2k)!F^{2k}}\sum_{\substack{n\geq 1 \\ n\text{ odd}}}\frac{n^{2k} q^{n/2}}{1+q^n}
	\\&=\frac{4}{zF}\sum_{\substack{n\geq 1 \\ n\text{ odd}}}\left(\sum_{k=0}^\infty \frac{(nt)^{2k}}{(2k)!F^{2k}}\right)\frac{q^{n/2}}{1+q^n}
	\\&=\frac{4}{zF}\sum_{\substack{n\geq 1 \\ n\text{ odd}}}\cos\left(i\frac{nt}{F}\right)\frac{q^{n/2}}{1+q^n}
	\\&=\frac{4}{zF}\sum_{n\geq 1} \cos\left((2n+1)\frac{it}{F}\right)\frac{q^{n+1/2}}{1+q^{2n-1}}.
	\end{align*}
	Now we can use the formula for $\cn$ from  \cite[\S\,22.6]{WhWa27} (the Whittaker-Watson notation translates into $\kp=z$, $\kp'=\tilde{z}$ and $K=\frac{1}{2}\pi F$), to get the above sum equal to $\cn(it,z)$. This directly implies the first equality for $\operatorname{NC}(t)$. The second equality follows from the formulas in \cite[\S\,22.4]{WhWa27}.
\end{proof}

\begin{remark}
One alternative way of deriving that $\mu_k$ is a polynomial with the mentioned properties, is by using the continued fractions for the Laplace transforms
$$\int_0^\infty \cn(u,z)e^{-xu}\d u=\frac{4 F}{z} \sum_{\substack{n \geq 1 \\n \text{ odd}}}\frac{q^{n/2}}{(1+q^n)(x^2 F^2+n^2)}$$
found by Stieltjes in 1894 \cite[Chapitre XI]{St94} and
$$\int_0^\infty \sd(u,z)e^{-xu}\d u=\frac{4 F}{z\tilde{z}} \sum_{\substack{n \geq 1 \\n \text{ odd}}}\frac{(-1)^{\frac{n-1}{2}}q^{\frac{n}{2}}n}{(1+q^n)(x^2F^2+n^2)}$$
from one of Ramanujan's notebooks (see also \cite{Ro07}). This derivation was done by Duke in \cite[\S\,4]{Du08}.
However, it does not cover the polynomials $\nu_k$; they may be treated similarly using another continued fraction of Stieltjes from \cite{St94}.
\end{remark}

Recall that $\cF(u)$ as defined in \eqref{cF-def} has period representation \eqref{cF-alt},
the second line in its integral representation follows from the formula $$\cF(u)=u^{-1/2}\bigl(\cF(1/u)-i\cF(1-1/u)\bigr)$$ for analytic continuation of the hypergeometric function to the complex plane with the cut along the half-line $[1,\infty)$.

Combining this with the results of Lemma \ref{generating functions}, we can finally compute $L(E,k)$ as a period.

\begin{theorem}
\label{period integrals}
If $E$ is a curve of conductor $32$, then its $L$-value at an even integer $k>1$ is given as a period by
\begin{align*}
L(E,k)&=\frac{\pi^{2k-3}i^{k-2}}{2^{2k}(k-1)!(k-2)!}
\int_0^1\frac{\mu_k(\alpha^2)\,\d\alpha}{\sqrt{1-\alpha^2}}
\int_0^{2\sqrt{\alpha}/(1-\alpha)}\frac{\mu_k(-\beta^2)\,\d\beta}{\sqrt{1+\beta^2}}
\\ &\quad\times
\bigl(\cF(\alpha^2)\cF(1+\beta^2)-\tfrac12\cF(1-\alpha^2)\cF(-\beta^2) +i\cF(\alpha^2)\cF(-\beta^2)\bigr)^{k-2},
\end{align*}
and its $L$-value at an odd integer $k>1$ is given as a period by
\begin{align*}
L(E,k)&=\frac{\pi^{2k-3}i^{k+1}}{2^{k+3}(k-1)!(k-2)!}
\int_{0}^1 \frac{\nu_k(\alpha)\d \alpha}{\alpha\sqrt{1-\alpha}}
\int_{p(\alpha)}^\infty
\frac{\mu_k(\beta)\d \beta}{\sqrt{\beta}(1-\beta)}
\\ &\quad\times
\bigl(\cF(\alpha)\cF(1-\beta)-\tfrac{1}{2}\cF(1-\alpha)\cF(\beta)\bigr)^{k-2},
\end{align*}
where $$p(\alpha)=\frac{-8 \alpha+4\sqrt{4 \alpha^2+\alpha (1-\alpha)^2}}{(1-\alpha)^2}.$$
Here $\mu_k$ and $\nu_k$ are polynomials as given in Lemma \nolinebreak \textup{\ref{generating functions}}.
\end{theorem}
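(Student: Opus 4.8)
The plan is to start from the intermediate representation \eqref{intermediate L}. Renaming the inner integration variable there to $w$, so that $L(E,k)=\frac{8\pi^{2k-1}i^k}{(k-1)!(k-2)!}\int_0^\infty\int_v^\infty(w-v)^{k-2}\cE_1(2iv)\cE_2(iw)\d w\,\d v$, I would substitute the closed forms for $\cE_1$ and $\cE_2$ found in the previous section. For even $k$ these are $\cE_1(\tau)=\Eis_k(2\tau)$ and $\cE_2(\tau)=-i\Eis_k(2\tau+\tfrac12)$, while for odd $k$ they are $\cE_1(\tau)=-2^{k-1}\Eistilde_k(2\tau)$ and $\cE_2(\tau)=\Eis_k(2\tau)$. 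Using Lemma \ref{generating functions} I would then replace each $\Eis_k$ by $\tfrac14F^k z\tilde z\,\mu_k(z^2)$ (even) or $\tfrac14F^k z\,\mu_k(z^2)$ (odd), and each $\Eistilde_k$ by $\tfrac14F^k\tilde z\,\nu_k(z^2)$, where $z,\tilde z$ and $F=\cF(z^2)$ are the modular functions of that section evaluated at the relevant argument ($4iv$ for the first factor, and $2iw+\tfrac12$ or $2iw$ for the second). The structural point is that the integrand then carries the weight-$1$ form $F$ to the total power $2k$, while all the coupling between the two variables resides in the factor $(w-v)^{k-2}$.

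The second step is the change of variables $(v,w)\mapsto(\alpha,\beta)$, in which $\alpha$ and $\beta$ record the value of the Hauptmodul at the two points; this is arranged so that $\mu_k,\nu_k$ acquire exactly the arguments $\alpha^2,-\beta^2$ (even) or $\alpha,\beta$ (odd) of the statement. For the Jacobians I would use the derivative of the Hauptmodul, which has the shape $\delta(z^2)\propto z^2\tilde z^2F^2$ (equivalently $\delta z\propto z\tilde z^2F^2$), with the constant fixed by matching leading Fourier coefficients. Carrying this out, each single-variable substitution cancels the $z\tilde z$, $z$, or $\tilde z$ supplied by the relevant $\Eis_k$ or $\Eistilde_k$, produces the measures $\frac{\d\alpha}{\sqrt{1-\alpha^2}}$ and $\frac{\d\beta}{\sqrt{1+\beta^2}}$ in the even case (here $\tilde z=\sqrt{1+\beta^2}$, because the $\tfrac12$-shift twists $z^2$ into the negative value $-\beta^2$) and $\frac{\d\alpha}{\alpha\sqrt{1-\alpha}}$, $\frac{\d\beta}{\sqrt\beta(1-\beta)}$ in the odd case, and reduces the ambient $F^{2k}$ to $F^{2k-4}=(F\cdot F)^{k-2}$. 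The outer range $\alpha\in(0,1)$ reflects that $z$ runs from $0$ at $i\infty$ to $1$ at $0$ along the imaginary axis.

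It remains to convert $(w-v)^{k-2}$ together with the surviving $F^{2k-4}$ into the bilinear combination of $\cF$-values raised to the power $k-2$. For this I would invoke the period relation $\tau=i\,\cF(1-z^2)/\cF(z^2)$ (the classical $\tau=iK'/K$, since $K=\tfrac\pi2\cF(z^2)$ and $K'=\tfrac\pi2\cF(1-z^2)$) to write both imaginary-axis coordinates as ratios of $\cF$-values, express $(w-v)$ as a difference of such ratios, and then multiply by the two surviving factors $F=\cF(\alpha^2)$ and $\cF(-\beta^2)$; clearing denominators yields precisely $\cF(\alpha^2)\cF(1+\beta^2)-\tfrac12\cF(1-\alpha^2)\cF(-\beta^2)+i\cF(\alpha^2)\cF(-\beta^2)$. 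The coupling region $v<w$, transported through the substitutions, becomes the inner limit $\beta\in(0,2\sqrt\alpha/(1-\alpha))$. The odd case runs in parallel: since $\cE_2(\tau)=\Eis_k(2\tau)$ carries no $\tfrac12$-shift, the combination degenerates to the simpler $\cF(\alpha)\cF(1-\beta)-\tfrac12\cF(1-\alpha)\cF(\beta)$, but an additional reciprocal substitution on the second factor is needed to send the inner variable over $(p(\alpha),\infty)$ with $p(\alpha)=\frac{-8\alpha+4\sqrt{4\alpha^2+\alpha(1-\alpha)^2}}{(1-\alpha)^2}$, engaging the $u>1$ branch of \eqref{cF-alt} for $\cF(\beta)$ and $\cF(1-\beta)$.

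The main obstacle is the bookkeeping forced by the $\tfrac12$-shift in the even case (and the analogous reciprocal substitution in the odd case): the Hauptmodul value $z(2iw+\tfrac12)$ sits at a twisted cusp, so the naive relation $\tau=iK'/K$ must be corrected using the connection formula $\cF(u)=u^{-1/2}\bigl(\cF(1/u)-i\cF(1-1/u)\bigr)$ and the second branch of \eqref{cF-alt}. This correction is exactly what generates the term $\cF(1+\beta^2)$ and the imaginary cross-term $i\cF(\alpha^2)\cF(-\beta^2)$, and what pins down the inner integration limit; making the real and imaginary parts combine correctly, together with the precise determination of the region boundary and the final collection of constants into $\frac{\pi^{2k-3}i^{k-2}}{2^{2k}(k-1)!(k-2)!}$ (respectively $\frac{\pi^{2k-3}i^{k+1}}{2^{k+3}(k-1)!(k-2)!}$), is where the care is needed.
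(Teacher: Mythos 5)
Your overall skeleton is the same as the paper's: start from \eqref{intermediate L}, write $\cE_1,\cE_2$ via $\Eis_k$ and $\Eistilde_k$, invoke Lemma~\ref{generating functions}, and pass to Hauptmodul variables with $v=\cF(1-\alpha)/(2\cF(\alpha))$ and Jacobian $\d v=\d\alpha/(2\pi\alpha(1-\alpha)\cF(\alpha)^2)$, each substitution consuming the $z$, $\tilde z$ factors and two powers of $F$. The genuine gap is the step you wave at as ``transported through the substitutions'': the integration limits. Everything hinges on an \emph{algebraic duplication law} for the Hauptmodul at half the argument, which your sketch never identifies and which cannot be bypassed. Writing $Z(v)=z(iv)^2$, the paper uses the Landen-type identity $(1-Z(v))^2Z(\tfrac12 v)^2-16Z(v)\bigl(1-Z(\tfrac12 v)\bigr)=0$, i.e.\ $Z(\tfrac12 v)=p(Z(v))$, together with $1-Z\bigl(-\tfrac{i}{2}+\tfrac12 v\bigr)=\bigl(1-Z(\tfrac12 v)\bigr)^{-1}$ for the shifted line. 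These two identities are the \emph{entire} source of the boundary data: they give $p(\alpha)$ in the odd case, and $Z\bigl(-\tfrac{i}{2}+\tfrac12 v\bigr)=-p(\alpha)/(1-p(\alpha))=-4\sqrt{\alpha}/(1-\sqrt{\alpha})^2$ in the even case, which after the rescalings $\alpha\to\alpha^2$, $\beta\to-\beta^2$ becomes the limit $2\sqrt{\alpha}/(1-\alpha)$. Without a relation expressing $Z(v/2)$ (and its shifted variant) in terms of $\alpha=Z(v)$, your argument stalls exactly at the point you flag as ``where the care is needed.''

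Two further divergences from the paper. First, in the even case the paper does not ``correct'' the relation $\tau=iK'/K$ by the connection formula: it keeps the unchanged inversion $w=\cF(1-\beta)/(2\cF(\beta))$, which for $\beta<0$ automatically parameterizes the line $\Im w=-\tfrac12$ (because $1-\beta>1$ places $\cF(1-\beta)$ on the continued branch of \eqref{cF-alt}), while the cross-term $i\cF(\alpha^2)\cF(-\beta^2)$ originates \emph{before} any hypergeometric input, from the affine change $iw=2iz+\tfrac12$ turning the kernel $(z-v)^{k-2}$ into a constant times $(2w-v+i)^{k-2}$; likewise $\cF(1+\beta^2)$ is simply $\cF(1-\beta)$ after $\beta\to-\beta^2$, not a term ``generated'' by a correction. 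Second, and more seriously, in the odd case the paper performs \emph{no} reciprocal substitution: it applies the identical substitution and quotes $Z(\tfrac12 v)=p(\alpha)$ for the inner endpoint. Your proposed mechanism also fails quantitatively: the direct image of $w\in(\tfrac12 v,\infty)$ under $\beta=Z(w)$ (with $Z$ decreasing and $Z(\infty)=0$) is the interval $(0,p(\alpha))$, and a reciprocal map $\beta\mapsto1/\beta$ sends this to $(1/p(\alpha),\infty)$, not to $(p(\alpha),\infty)$. Passing from the interval $(0,p(\alpha))$ to the stated range $(p(\alpha),\infty)$, on which $\cF(\beta)$ and $\cF(1-\beta)$ must be read on the continued branch, in fact amounts to replacing the real $w$-ray by the complex contour $w=\cF(1-\beta)/(2\cF(\beta))$, $\beta\in(p(\alpha),\infty)$, terminating at the cusp $\tau=-\tfrac12$ --- a contour-deformation step the paper leaves implicit and your proposal does not address at all.
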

\begin{proof}

As before, we will consider the different parity of $k$ separately. The substitution in the following part is based on the method used in \cite[\S\,3]{Ro13}.

\medskip
\noindent
(a) \textsl{Case of $k$ even}.
Using \eqref{intermediate L} we derive
\begin{align*}
L(E,k)
&=\frac{8\pi^{2k-1}i^{k-1}}{(k-1)!(k-2)!}\int_0^\infty \Eis_k(4iv)\,\d v\int_v^\infty (z-v)^{k-2} \Eis_k(2iz+\tfrac12)\,\d z
\\
&=\frac{\pi^{2k-1}i^{k-1}}{(k-1)!(k-2)!}\int_0^\infty \Eis_k(iv)\,\d v\int_{-\frac i2+\frac12v}^{\infty} (\tfrac12w+\tfrac i4-\tfrac14v)^{k-2} \Eis_k(iw)\,\d w
\\
&=\frac{\pi^{2k-1}i^{k-1}}{4^{k-2}(k-1)!(k-2)!}\int_0^\infty \Eis_k(iv)\,\d v\int_{-\frac i2+\frac12v}^{\infty} (2w-v+i)^{k-2} \Eis_k(iw)\,\d w.
\end{align*}
Now for $Z(v)=z(iv)^2$ we use
$$
1-Z\bigl(-\tfrac i2+\tfrac12v\bigr)=\frac1{1-Z(\tfrac12v)}
$$
and
$$
(1-Z(v))^2Z(\tfrac12v)^2-16Z(v)(1-Z(\tfrac12v))=0.
$$
Passing to the new variables $\alpha=Z(v)$ and $\beta=Z(w)$, so that
$$
v=\frac{\cF(1-\alpha)}{2\cF(\alpha)}, \quad
w=\frac{\cF(1-\beta)}{2\cF(\beta)}$$
and
$$
\d v=\frac{\d\alpha}{2\pi\alpha(1-\alpha)\cF(\alpha)^2}, \quad
\d w=\frac{\d\beta}{2\pi\beta(1-\beta)\cF(\beta)^2},
$$
we arrive at
\begin{align*}
L(E,k)
&=\frac{\pi^{2k-3}i^{k-1}}{4^{k+1}(k-1)!(k-2)!}\int_0^1 \frac{\cF(\alpha)^{k-2}\mu_k(\alpha)\,\d\alpha}{\sqrt{\alpha(1-\alpha)}}
\\ &\quad\times
\int_{-4\sqrt{\alpha}/(1-\sqrt{\alpha})^2}^0
\biggl(\frac{\cF(1-\beta)}{\cF(\beta)}-\frac{\cF(1-\alpha)}{2\cF(\alpha)}+i\biggr)^{k-2}
\frac{\cF(\beta)^{k-2}\mu_k(\beta)\,\d\beta}{\sqrt{\beta(1-\beta)}}
\\ &=
\frac{\pi^{2k-3}i^{k-2}}{2^{2k}(k-1)!(k-2)!}
\int_0^1\frac{\mu_k(\alpha^2)\,\d\alpha}{\sqrt{1-\alpha^2}}
\int_0^{2\sqrt{\alpha}/(1-\alpha)}\frac{\mu_k(-\beta^2)\,\d\beta}{\sqrt{1+\beta^2}}
\\ &\quad\times
\Bigl(\cF(\alpha^2)\cF(1+\beta^2)-\frac12\cF(1-\alpha^2)\cF(-\beta^2) +i\cF(\alpha^2)\cF(-\beta^2)\Bigr)^{k-2}.
\end{align*}

\medskip
\noindent
(b) \textsl{Case of $k$ odd}.
We proceed similarly to the previous case:
\begin{align*}
L(E,k)
&=\frac{2^{k-1}8\pi^{2k-1}i^{k+1}}{(k-1)!(k-2)!}
\int_0^\infty \Eistilde_k(4iv)\,\d v\int_v^\infty (z-v)^{k-2} \Eis_k(2iz)\,\d z
\\
&=\frac{2^{k-1}\pi^{2k-1}i^{k+1}}{(k-1)!(k-2)!}
\int_{0}^\infty \Eistilde_k(iv)\,\d v\int_{\frac12v}^{\infty}
(\tfrac12w-\tfrac14v)^{k-2} \Eis_k(iw)\,\d w
\\
&=\frac{2^{k-1}\pi^{2k-1}i^{k+1}}{4^{k-2}(k-1)!(k-2)!}
\int_{0}^\infty \Eistilde_k(iv)\,\d v\int_{\frac12v}^{\infty}
(2w-v)^{k-2} \Eis_k(iw)\,\d w.
\end{align*}
We use the same substitution as in the even case.
Note that for this choice of $v$ (hence $\alpha$) we have
$$
Z\bigl(\tfrac12v\bigr)
=\frac{-8 Z(v)+4\sqrt{4 Z(v)^2+Z(v) (1-Z(v))^2}}{(1-Z(v))^2}=p(Z(v)),
$$
so that the above integral becomes
\begin{align*}
L(E,k)&=\frac{\pi^{2k-3}i^{k-1}}{2^{k+3}(k-1)!(k-2)!}
\int_{0}^1 \frac{\cF(\alpha)^{k-2}\nu_k(\alpha)\d \alpha}{\alpha\sqrt{1-\alpha}}
\int_{p(\alpha)}^\infty
\frac{\cF(\beta)^{k-2}\mu_k(\beta)\d \beta}{\sqrt{\beta}(1-\beta)}
\\ &\quad\times
\biggl(\frac{\cF(1-\beta)}{\cF(\beta)}-\frac{\cF(1-\alpha)}{2\cF(\alpha)}\biggr)^{k-2}
\\ &=
\frac{\pi^{2k-3}i^{k-1}}{2^{k+3}(k-1)!(k-2)!}
\int_{0}^1 \frac{\nu_k(\alpha)\d \alpha}{\alpha\sqrt{1-\alpha}}
\int_{p(\alpha)}^\infty
\frac{\mu_k(\beta)\d \beta}{\sqrt{\beta}(1-\beta)}
\\ &\quad\times
\left(\cF(\alpha)\cF(1-\beta)-\tfrac{1}{2}\cF(1-\alpha)\cF(\beta)\right)^{k-2}.
\qedhere
\end{align*}
\end{proof}

\begin{proof}[Proof of Theorems~\textup{\ref{th2}} and \textup{\ref{th3}}]
To deduce Theorems \ref{th2} and \ref{th3} from Theorem \ref{period integrals}, we use a formula for the Hadamard product of generating functions: If $A(x)=\sum_{n=0}^\infty a_n x^n$ and $B(x)=\sum_{n=0}^\infty b_n x^n$
are generating functions, then $$\sum_{n=0}^\infty a_n b_n x^n=\int_0^1 A(e^{2\pi i t}\sqrt{x})B(e^{-2\pi i t}\sqrt{x}) \d t \quad\text{for}\; x\ge0.$$
It follows that
$$\sum_{\substack{k\geq 1 \\ k\;\text{even}}} \frac{\mu_{k}(\alpha^2)}{(k-1)!}\frac{\mu_{k}(-\beta^2)}{(k-1)!}x^{k-2}=\frac{1}{x}\int_0^1 \sd\big(e^{2\pi it}\sqrt{x},\sqrt{1-\alpha^2}\big)\sd\big(e^{-2\pi it}\sqrt{x},\sqrt{1+\beta^2}\big)\d t$$
and
\begin{align*}
&
\sum_{\substack{k>1 \\ k\;\text{odd}}} \frac{\nu_{k}(\alpha)}{(k-1)!}\frac{\mu_{k}(\beta)}{(k-1)!}x^{k-2}
\\ &\qquad
=\frac{1}{x}\int_0^1 \big(\cd\big(\tfrac{1}{2}e^{2\pi it}\sqrt{x},\sqrt{1-\alpha}\big)\nc\big(e^{-2\pi it}\sqrt{x},\sqrt{1-\beta}\big)-1\big)\d t;
\end{align*}
combined with Theorem \ref{period integrals}, these give both Theorems \ref{th2} and \ref{th3}.
\end{proof}

%==================================================

\end{document}